\documentclass[a4paper,oneside,12pt,reqno]{amsart}
\usepackage[utf8]{inputenc}
\usepackage{amsaddr}

\usepackage{xcolor, graphicx}
\usepackage{mathrsfs, amsthm, amsmath, amssymb, mathtools, bbm}
\usepackage{hyperref}
\hypersetup{colorlinks=true, linkcolor=purple, citecolor=cyan}
\usepackage{geometry}
\usepackage{enumitem}
\usepackage{comment}

\DeclareMathAlphabet{\mathmybb}{U}{bbold}{m}{n}

\newtheorem{thm}{Theorem}
\newtheorem{lemma}[thm]{Lemma}

\newtheorem{prop}[thm]{Proposition}

\newcommand{\tuple}[1]{{\bf #1}}

\newcommand{\close}{\sigma}
\newcommand{\open}{\sigma^{-1}}
\newcommand{\merge}{\varphi}
\newcommand{\sepa}{\psi}
\newcommand{\sew}{\widetilde{\varphi}}
\newcommand{\cut}{\widetilde{\psi}}
\newcommand{\tree}{\tau}
\newcommand{\optree}{\theta}

\newcommand{\gm}{\mathfrak{m}}
\newcommand{\gt}{\mathfrak{t}}
\newcommand{\pp}{\mathbbm{p}}
\newcommand{\cc}{\mathbbm{c}}
\newcommand{\cB}{\mathcal{B}}
\newcommand{\cE}{\mathcal{E}}
\newcommand{\cT}{\mathcal{T}}


\title[Slit-Slide-Sew bijections for planar bipartite maps]{Slit-Slide-Sew bijections for planar bipartite maps with prescribed degrees}
\author{Juliette Schabanel}
\address{LaBRI, Université de Bordeaux, France}
\email{jschaban@phare.normalesup.org}

\begin{document}

\begin{abstract}
    We present a bijective proof for the planar case of Louf's counting formula on bipartite planar maps with prescribed face degree, that arises from the Toda hierarchy. We actually show that Louf's formula hides two simpler formulas, both of which can be rewritten as equations on trees using duality and Schaeffer’s bijection for Eulerian maps. We prove them bijectively and show that the constructions we provide for trees can also be interpreted as ``slit-slide-sew” operations on maps. As far as we know, this is the first bijection for a formula with infinitely many parameters arising from an integrable hierarchy.
\end{abstract}

\maketitle

\section*{Introduction}

\textbf{Context.} A planar map is a combinatorial object describing the embedding of a planar multigraph on the sphere up to homeomorphism (see Section~\ref{sec:setting} for precise definitions). Maps are known to have a lot of applications in different fields of mathematics, physics, and computer science. Hence, their structures have been widely studied during the last decades.

The enumeration of planar maps started with the work of Tutte in the 60's \cite{Tut63}. Tutte used decomposition properties of maps to write equations satisfied by the generating functions of planar maps and obtain enumeration formulas for them. The formulas he derived that way are remarkably simple, and their structure, with the presence of Catalan numbers, suggested that maps could be interpreted as some decorated trees. After an initial work of bijective explanation by Cori and Vauquelin \cite{CoVa81}, Schaeffer developed the field by obtaining various bijective constructions between maps and trees \cite{Sch97}. His work was followed by numerous papers dealing with various families of maps \cite{BDG04,BeFu12,AlPo13,Bet18}. 

Another powerful tool for map enumeration is integrable hierarchies such as the KP and the Toda hierarchies. Integrable hierarchies are infinite sets of partial differential equations on functions with an infinite number of variables $(p_i)_{i\geqslant1}$ which arose from mathematical physics. Goulden and Jackson showed in 2008 \cite{GoJa08} that certain generating functions for maps (with $p_i$ counting the vertices of degree $i$) are solution to the KP hierarchy, which allowed them to derive a very simple recurrence formula for cubic maps, i.e. maps in which all vertices have degree $3$. They were followed by Carrel and Chapuy  with a recurrence formula on general maps \cite{CaCh15} and Kazarian and Zograf for bipartite maps \cite{KaZo15}. Using similar methods with the Toda hierarchy, Louf \cite{Louf21} recently derived recurrence formulas for bipartite maps with prescribed face degrees and for constellations. 

Finding bijective explanations of these formulas would give us a deeper understanding of maps. Bijective proofs for the Goulden--Jackson and Carrel--Chapuy formulas were found by Chapuy, Feray and Fusy in the one-faced case \cite{CFF13} and by Louf in the planar case \cite{Louf19}, but the question still remains mainly an open problem.

\textbf{Contribution.} In this paper, we present a bijective proof for the planar case of Louf's formula on bipartite maps with prescribed degrees \cite{Louf21}. We actually show that Louf's formula hides two simpler formulas, both of which can be rewritten as equations on trees using duality and Schaeffer's bijection for Eulerian maps \cite{Sch97}. We prove them bijectively and show that the constructions we provide for trees can also be interpreted as ``slit-slide-sew" operations on maps, similar to those of \cite{Bet18,Louf19, BGM22, BeKo26, BeFuLo24}.
As far as we know, this is the first bijection for a formula with infinitely many parameters arising from an integrable hierarchy.

\textbf{Structure of the paper.} In Section~\ref{sec:setting}, we provide some definitions on maps and explain the formulas our bijections prove. In Section~\ref{sec:trees} recall a bijection due to Schaeffer \cite{Sch97} which allows us to express the equations in a simpler context and make the bijections easier to understand. The two bijections on bipartite maps are described respectively in Section~\ref{sec:bijection vertex} and Section~\ref{sec:bijection face}.   

\section{Setting and presentation of the results}
\label{sec:setting}

\subsection{Definitions and notations}

A \emph{planar map} is a proper embedding of a finite connected graph (possibly with loops and multiple edges) into the sphere, where \emph{proper} means that edges are smooth simple arcs which meet only at their endpoints. Two maps are identified if they can be mapped one onto the other by an orientation preserving homeomorphism. The \emph{vertices} and \emph{edges} of the maps are inherited from the graph and the \emph{faces} of the map are the connected components of the complementary of the embedded graph. We call \emph{half-edge} an edge carrying one of its two possible orientations. A half-edge $h$ is \emph{incident} to its origin vertex and to the face $f$ that lies on its left. The \emph{degree} of a vertex or a face is the number of half-edges incident to it. 


We implicitly consider our maps to be rooted on a half-edge, which means that one half-edge is distinguished. This half-edge is called the \emph{root edge} of the map and the face it is incident to is called the \emph{outer face} of the map.


If $\gm$ is a map, we denote by $V(\gm)$ the set of its vertices, by $E(\gm)$ the set of its edges and by $F(\gm)$ the set of its faces. We also define $v(\gm)=|V(\gm)|$ its number of vertices, $n(\gm)=|E(\gm)|$ its number of edges and $f(\gm)=|F(\gm)|$ its number of faces. 
Those quantities satisfy the following relation. 

\begin{thm}[Euler's formula]
    For every planar map $\gm$, $v(\gm)+f(\gm) = n(\gm)+2$.
\end{thm}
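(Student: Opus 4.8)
The plan is to argue by induction on the number of edges $n(M)$, using the two elementary surgery operations on maps, deletion and contraction of an edge.

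For the base case $n(M)=0$: since a planar map is an embedding of a \emph{connected} graph, $M$ is then a single vertex on the sphere, whose complement (a sphere minus a point) is connected, so $M$ has one face. Hence $v(M)=1$, $f(M)=1$, $n(M)=0$ and $v(M)+f(M)=2=n(M)+2$.

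For the inductive step, assume $n(M)\geqslant 1$ and that the identity holds for all planar maps with strictly fewer edges, and fix an edge $e$ of $M$. If $e$ is not a loop, let $M'=M/e$ be obtained by contracting $e$, collapsing it to a point and identifying its two distinct endpoints: this keeps the underlying graph connected and can be realised by a continuous deformation of the sphere that shrinks the arc $e$ without merging or splitting any face, so $v(M')=v(M)-1$, $n(M')=n(M)-1$ and $f(M')=f(M)$; the induction hypothesis then gives $(v(M)-1)+f(M)=(n(M)-1)+2$, which is the desired identity. If instead $e$ is a loop, let $M'=M\setminus e$ be obtained by deleting $e$: a loop is a simple closed curve on the sphere, so by the Jordan curve theorem it separates the sphere, the two faces on its two sides are distinct, and deleting $e$ merges exactly these two into one while leaving every other face and the connectedness of the graph unchanged; hence $v(M')=v(M)$, $n(M')=n(M)-1$, $f(M')=f(M)-1$, and the induction hypothesis gives $v(M)+(f(M)-1)=(n(M)-1)+2$, again the claim.

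The combinatorial bookkeeping above is routine; the one genuinely non-trivial point --- and the step I expect to need the most care --- is the topological input that contracting a non-loop edge preserves the number of faces while deleting a loop decreases it by exactly one. Both facts rest on the Jordan--Schoenflies theorem for the sphere: a non-loop edge can be shrunk inside a small disc neighbourhood, and a loop genuinely cuts the sphere into two discs so its two sides cannot belong to the same face. Alternatively, one could bypass the deletion/contraction recursion and deduce the formula from the fact that the edges outside a spanning tree of $M$ form a spanning tree of the dual map, but that identity again relies on the same topological groundwork; in a paper centred on bijective enumeration it is customary to treat this as classical and record only the arithmetic, so I would isolate it as a short lemma and otherwise keep the proof as above.
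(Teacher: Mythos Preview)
Your induction on the number of edges via contraction of non-loops and deletion of loops is correct and is one of the standard proofs of Euler's formula. The paper itself, however, does not prove this statement at all: it simply records it as a classical fact and moves on. So there is nothing to compare against; your argument is fine, and your closing remark that in an enumeration paper one would typically treat this as classical is exactly the choice the authors made.
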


Let $\gm$ be a map. A \emph{path} in $\gm$ is a sequence $h_1 \ldots h_k$ of half-edges such that for each $i \in \{1, \ldots, k-1\}$, the end of $h_i$ is the origin of $h_{i+1}$. A \emph{cycle} is a path such that the end of $h_k$ is the origin of $h_1$. The \emph{length} of a path is the number of half-edges it contains. 

A map is \emph{bipartite} if all its cycles have even length. Bipartiteness is equivalent to the existence of a coloring of the vertices of the map with two colors, black and white, such that the ends of each edge receive different colors. A bipartite map can be endowed with a canonical coloring, the one such that the root edge is oriented from white to black. In what follows, we will assume that all our bipartite maps are equipped with this coloring. Note that the orientation of the root edge can be recovered from the coloring.

Let $\tuple{d} = (d_1, d_2, \ldots)$ be an infinite sequence of non-negative integers with finitely many nonzero terms. A bipartite map $\gm$ has \emph{degree distribution} $\tuple{d}$ if it has exactly $d_i$ faces of degree $2i$ for each $i>0$. We denote by $\mathcal{B}(\tuple{d})$ the set of planar bipartite maps of degree distribution $\tuple{d}$ and by $B(\tuple{d})$ their number with the convention $B(\tuple{0}) = 0$. Observe that the degree distribution determines the numbers of faces, edges and vertices of the map: \[f(\tuple{d}) = \sum_{i\geqslant1} d_i,\]
\[n(\tuple{d}) = \sum_{i\geqslant1} id_i\]
and Euler's formula implies \[v(\tuple{d}) = n(\tuple{d})+2-f(\tuple{d}) = 2+ \sum_{i\geqslant1}(i-1)d_i.\] 


\subsection{Recursive formulas}

Our goal is to provide a bijective proof of the following formula obtained by Louf in \cite{Louf21} for bipartite maps, in the planar case.

\begin{thm}[\cite{Louf21}]
    The numbers $B(\tuple{d})$ of planar bipartite maps 
    with degree distribution $\tuple{d}$ satisfy the following recurrence relation:
    \begin{equation}
\label{eq:origin}
    \left(\!\binom{n(\tuple{d})+1}{2}-\binom{v(\tuple{d})}{2}\!\right)B(\tuple{d}) = \sum_{\tuple{s} + \tuple{t} = \tuple{d}} (1+n(\tuple{s}))\binom{v(\tuple{t})}{2} B(\tuple{s})B(\tuple{t}).
\end{equation}
    where $\displaystyle f(\tuple{d}) = \sum_{i\geqslant1} d_i$, $\displaystyle n(\tuple{d}) = \sum_{i\geqslant1} id_i$, $\displaystyle v(\tuple{d}) = n(\tuple{d})+2-f(\tuple{d}) = 2+ \sum_{i\geqslant1}(i-1)d_i$ and $\tuple{s} + \tuple{t} = \tuple{d}$ means $s_i + t_i = d_i$ for all $i \geqslant 1$.
\end{thm}


We would like to interpret each side of \eqref{eq:origin} as the numbers of respectively one and a pair of bipartite maps with some marked elements. To do so, we first get rid of the difference in the left-hand side. Using Euler's formula, we can rewrite \eqref{eq:origin} as follows:

\begin{equation}
\label{eq:rewrite}
    \left(\!(f(\tuple{d})-1)v(\tuple{d}) + \binom{f(\tuple{d})-1}{2}\!\right)B(\tuple{d}) = \sum_{\tuple{s} + \tuple{t} = \tuple{d}} (v(\tuple{s}) + f(\tuple{s})-1)\binom{v(\tuple{t})}{2} B(\tuple{s})B(\tuple{t}).
\end{equation}

This leads to a tempting separation that turns out to be true. In the following sections, we will give a bijective proof of the following formulas for bipartite maps: 

\begin{equation}
  \tag{marked vertex identity}
  \label{eq:vertex}
  4(f(\tuple{d})-1)B(\tuple{d}) = \sum_{\tuple{s}+\tuple{t}=\tuple{d}} v(\tuple{s})v(\tuple{t}) B(\tuple{s})B(\tuple{t})
\end{equation}

\begin{equation}
  \tag{marked face identity}
  \label{eq:face}
  \binom{f(\tuple{d})-1}{2}B(\tuple{d}) = \sum_{\tuple{s}+\tuple{t}=\tuple{d}} (f(\tuple{s})-1)\binom{v(\tuple{t})}{2} B(\tuple{s})B(\tuple{t})
\end{equation}

\begin{prop}
    Equation~\eqref{eq:rewrite} is a corollary of the \ref{eq:vertex} and the \ref{eq:face}.
\end{prop}
\begin{proof}
    First multiply the \ref{eq:vertex} by $v(\tuple{d})$. Then, Euler's formula gives 
    \vspace{-0.2cm}
    \begin{align*}
        v(\tuple{d}) & = n(\tuple{d}) +2 - f(\tuple{d}) = 2 + \sum_{i\geqslant1} (i-1)d_i = 2 + \sum_{i\geqslant1} (i-1)(s_i+t_i) \\& = 2 + n(\tuple{s}) - f(\tuple{s}) + n(\tuple{t}) - f(\tuple{t}) = v(\tuple{s}) + v(\tuple{t}) -2.
    \end{align*} Thus we have 
    \begin{multline*}
    4v(\tuple{d})(f(\tuple{d})-1)B(\tuple{d}) = \sum_{\tuple{s}+\tuple{t}=\tuple{d}}v(\tuple{s})v(\tuple{t})(v(\tuple{s})-1+v(\tuple{t})-1)B(\tuple{s})B(\tuple{t})\\
    =\sum_{\tuple{s}+\tuple{t}=\tuple{d}}v(\tuple{s})v(\tuple{t})(v(\tuple{s})-1)B(\tuple{s})B(\tuple{t}) + \sum_{\tuple{s}+\tuple{t}=\tuple{d}}v(\tuple{s})v(\tuple{t})(v(\tuple{t})-1)B(\tuple{s})B(\tuple{t}).
    \end{multline*}
    By symmetry, the two sums in the right-hand side are equal. So by dividing by $4$ we obtain 
    \[v(\tuple{d})(f(\tuple{d})-1)B(\tuple{d}) = \sum_{\tuple{s}+\tuple{t}=\tuple{d}}v(\tuple{s})\binom{v(\tuple{t})}{2}B(\tuple{s})B(\tuple{t}).\]
    Summing this with the \ref{eq:face} gives \eqref{eq:origin}.
\end{proof}

\subsection{Bijective strategy}

In a different context, Bouttier, Guitter and Miermont constructed a bijection between pairs of trees with respectively $m_1$ edges and one marked vertex and $m_2$ edges and two marked vertices, and bipartites maps with exactly two faces of respective degrees $2m_1$ and $2m_2$ and one marked vertex \cite[Proposition 7]{BGM22}. Observe that this corresponds to the special case where $f(\tuple{d}) = 2$ of the equation 
\[v(\tuple{d})(f(\tuple{d})-1)B(\tuple{d}) = \sum_{\tuple{s}+\tuple{t}=\tuple{d}}v(\tuple{s})\binom{v(\tuple{t})}{2}B(\tuple{s})B(\tuple{t}),\]
which is equivalent to the \ref{eq:vertex}. 

Their method consists in slitting in each tree the same number of edges along the unique path between two marked elements, and then sew the two created cycles together to obtain a bipartite map with exactly one cycle. The reverse construction consists in cutting the two-faced map along its unique cycle and then closing the ``hole'' in each of the two resulting maps to obtain trees. 

Our bijective proof of the marked vertex identity will be a generalization of their method to bipartite maps with more than two faces, up to a few adaptations which allow us to forget the additional marked vertex. The \ref{eq:face} will then be obtained by applying three times the other bijection. 

For their method to generalize to maps with more than two faces, we need some way to make the choice of a path to slit and of a cycle to cut canonical. The simplest solution for that is to take a spanning tree of the map, and then consider the paths in that spanning tree. Of course, one cannot choose any spanning tree: it needs to have some properties that ensure that after applying a construction, the special path in the resulting map is indeed the path in the spanning tree of the map. A good choice of a spanning tree exists, and we could describe it as well as the bijection directly but many choices in the construction would appear arbitrary. Instead, we choose to do a detour on the dual of bipartite maps, Eulerian maps, on which the construction appears more naturally.

\section{Bipartite maps and Eulerian trees}
\label{sec:trees}

The goal of this section is to introduce a class of trees, \emph{balanced Eulerian trees}, which are in bijection with bipartite maps by a result of Schaeffer \cite{Sch97}, and for which our bijection will appear naturally. These trees will be trees on the faces of our maps, so before getting to them let us introduce duality on maps.

\subsection{Duality}

The \emph{dual} $\gm^*$ of a given map $\gm$ is the map obtained by putting a vertex in each face of $\gm$ and for each edge $e$ of $\gm$, an edge $e^*$ between the two faces it separates. The root of $\gm^*$ is the dual of the root of $\gm$, oriented from the left of the root of $\gm$ to its right. 
An example of a map and its dual is given in Figure~\ref{fig:dual}. 


\begin{figure}[ht!]
    \centering
    \includegraphics[height=5cm]{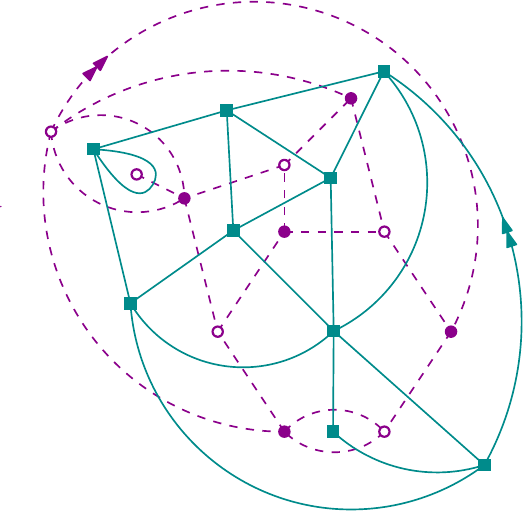}
    \caption{The blue map is the dual of the purple dashed map.}
    \label{fig:dual}
\end{figure}

\begin{prop}
    Duality is a bijection that exchanges vertices and faces, preserving their degree. 
    
    More precisely, for any planar map $\gm$, $(\gm^*)^*$ is equal to $\gm$ but with the root edge orientation reversed.
\end{prop}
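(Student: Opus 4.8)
The plan is to unpack the definition of the dual directly and verify the claimed correspondences one at a time. First I would establish that duality swaps vertices and faces: by definition $M^*$ has one vertex per face of $M$ and one edge $e^*$ per edge $e$ of $M$; the faces of $M^*$ then correspond to the vertices of $M$. Concretely, around a vertex $v$ of $M$ the incident half-edges, read in cyclic (say counterclockwise) order, cut out a cyclic sequence of corners; the dual edges $e^*$ of those edges, taken in the same cyclic order, bound a face of $M^*$, and this face contains exactly the point we placed inside... no, it contains the vertex $v$. This gives a bijection $F(M)\to V(M^*)$, $f\mapsto v_f$, and a bijection $V(M)\to F(M^*)$, $v\mapsto f_v$. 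For degrees, the key observation is that the half-edges incident to a face $f$ of $M$ are in bijection with the dual half-edges incident to $v_f$ in $M^*$ (each edge on the boundary of $f$ contributes one), so $\deg_{M^*}(v_f)=\deg_M(f)$; symmetrically $\deg_{M^*}(f_v)=\deg_M(v)$. Since $E(M^*)\cong E(M)$ via $e\mapsto e^*$ and $M^*$ is drawn on the same sphere, $M^*$ is again a (connected) planar map, so duality is a well-defined involution on the level of underlying unrooted maps.

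Next I would pin down the rooting and prove $(M^*)^*=M$ with reversed root orientation. The root of $M^*$ is, by definition, the dual half-edge of the root half-edge $h$ of $M$, oriented from the face on the left of $h$ to the face on the right of $h$. Applying the construction a second time, the vertices of $(M^*)^*$ sit inside the faces of $M^*$, which correspond to the vertices of $M$; tracing through, $(M^*)^*$ is naturally identified with $M$ as an unrooted map, with each edge $(e^*)^*$ identified with $e$. It remains to track the root: the root of $(M^*)^*$ is the dual of the root $h^*$ of $M^*$, oriented from the left of $h^*$ to the right of $h^*$. One checks that "the face of $M^*$ on the left of $h^*$" corresponds under the identification to the endpoint of $h$ that is the \emph{head} of $h$, while "the face on the right" corresponds to the \emph{tail}; hence the root of $(M^*)^*$ runs from the head of $h$ to the tail of $h$, i.e. it is $h$ with its orientation reversed. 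That is the asserted statement.

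The main obstacle is getting the orientation bookkeeping exactly right: the convention "$h$ is incident to the face on its left" together with "$e^*$ is oriented from the left face of $e$ to the right face of $e$" must be chained carefully so that applying it twice yields a reversal rather than the identity or some other half-edge. I would handle this with a small local picture around the root edge — four corners, the two endpoints of $h$ and the two adjacent faces — and simply read off which corner of $(M^*)^*$ the twice-dualized root starts and ends in. Once the local root computation is done, the rest is routine: the bijectivity on vertices/faces and the degree preservation follow immediately from the definitions, and the involutivity on unrooted maps is classical (the dual is drawn on the same sphere and the biduality identification $(e^*)^*=e$ is built into the construction). I would also remark that connectedness of $M^*$ follows from connectedness of $M$ (equivalently from planarity), so that $M^*$ is indeed a map in the sense defined above.
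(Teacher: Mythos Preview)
Your argument is correct: the direct unpacking of the definitions, the degree count via the bijection on half-edges, and especially the local orientation check (left of $h^*$ $\leftrightarrow$ head of $h$, right of $h^*$ $\leftrightarrow$ tail of $h$) all go through as you describe. The paper itself gives \emph{no} proof of this proposition; it is stated as a standard fact about planar duality and used immediately afterwards. So there is nothing to compare against: what you have written is a legitimate verification of a result the paper simply assumes.

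One cosmetic remark: the self-correction ``this face contains exactly the point we placed inside\ldots\ no, it contains the vertex $v$'' should be cleaned up in a final version, but it does not affect the mathematics.
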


Therefore, the dual of a bipartite map is a map whose vertices all have even degree. Such a map is called \emph{Eulerian}. A Eulerian map has degree distribution $\tuple{d}$ if it has exactly $d_i$ vertices of degree $2i$ for all $i>0$. We denote by $\mathcal{E}(\tuple{d})$ the set of planar Eulerian maps of degree distribution $\tuple{d}$ and by $E(\tuple{d})$ their number. 

\begin{prop}
    For all $\tuple{d}$, duality induces a bijection between $\mathcal{B}(\tuple{d})$ and $\mathcal{E}(\tuple{d})$. 
\end{prop}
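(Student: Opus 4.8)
The plan is to show that $M \mapsto M^*$ restricts to a bijection $\mathcal{B}(\tuple{d}) \to \mathcal{E}(\tuple{d})$, handling the forward inclusion, injectivity, and surjectivity separately, and paying attention to the root-orientation conventions throughout.

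\textbf{Forward inclusion and injectivity.} If $M \in \mathcal{B}(\tuple{d})$, then $M$ is bipartite, so the preceding proposition gives that $M^*$ is eulerian; and since duality carries each face of $M$ to a vertex of $M^*$ of the same degree, $M^*$ has exactly $d_i$ vertices of degree $2i$ for every $i$, i.e.\ $M^* \in \mathcal{E}(\tuple{d})$. Injectivity is then immediate: duality is a bijection on the set of all rooted planar maps, hence in particular injective on $\mathcal{B}(\tuple{d})$.

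\textbf{Surjectivity.} Fix $N \in \mathcal{E}(\tuple{d})$. Since duality is a global bijection, there is a unique rooted planar map $M$ with $M^* = N$; concretely, using $(M^*)^* = M$ with the root reversed, $M$ is obtained from $N^*$ by reversing the root (and re-applying the canonical black/white coloring). The degree bookkeeping is as before: the faces of $M = N^*$ are in degree-preserving correspondence with the vertices of $N$, so $M$ has exactly $d_i$ faces of degree $2i$, which also forces the correct $f$, $n$, $v$. The one point that genuinely requires an argument is that $M$ is bipartite, i.e.\ the converse of the preceding proposition: a planar map all of whose faces have even degree is bipartite. I would prove this by the standard parity argument. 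It suffices to show every simple cycle of $M$ has even length (an odd closed walk always contains an odd simple cycle). A simple cycle $\gamma$ separates the sphere into two disks; let $D$ be one of them. Summing the degrees of the faces of $M$ contained in $D$ counts each edge strictly inside $D$ twice and each edge of $\gamma$ exactly once, so the length of $\gamma$ has the same parity as $\sum_{f \subseteq D}\deg f$, which is even by hypothesis; hence $\gamma$ is even and $M$ is bipartite. Thus $M \in \mathcal{B}(\tuple{d})$ and duality is onto $\mathcal{E}(\tuple{d})$. (Alternatively one may simply cite the classical fact that a planar map is bipartite if and only if its dual is eulerian.)

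\textbf{Main obstacle.} The combinatorial content here is light; the delicate part is the bookkeeping of root orientations. Duality is not an involution on rooted maps but squares to the root-reversal map, and for bipartite maps the orientation of the root is tied to the canonical two-coloring. The care needed is therefore to check that the rooting conventions for $\mathcal{B}(\tuple{d})$ and $\mathcal{E}(\tuple{d})$ are set up so that the inverse operation ``reverse the root, then dualize'' sends $\mathcal{E}(\tuple{d})$ exactly onto $\mathcal{B}(\tuple{d})$ (landing at the ``root white-to-black'' normalization rather than merely somewhere in the larger set of maps whose faces all have even degree), so that $M \mapsto M^*$ is a bijection on the nose and not just up to re-rooting.
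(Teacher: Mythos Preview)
The paper gives no proof of this proposition; it is stated as an immediate consequence of the preceding proposition (duality exchanges vertices and faces, preserving degree) together with the sentence just before it (``Therefore, the dual of a bipartite map is a map whose vertices all have even degree''). Your argument is correct and supplies the one direction the paper leaves implicit, namely that the dual of an eulerian planar map is bipartite; the parity argument via the disk bounded by a simple cycle is the standard one and is fine.

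One remark: your ``main obstacle'' is not actually an obstacle. The set $\mathcal{B}(\tuple{d})$ is just the set of rooted planar maps that are bipartite and have $d_i$ faces of degree $2i$; the white-to-black coloring is a derived decoration determined by the root half-edge, not an additional constraint on which rooted maps are admitted. So once you know the preimage of $N$ under duality has all faces of even degree (hence is bipartite) with the right face-degree multiset, it lies in $\mathcal{B}(\tuple{d})$ automatically, with whatever root it inherits. There is no ``larger set'' to worry about and no re-rooting issue to resolve.
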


\subsection{Eulerian trees}

In this subsection, we describe a bijection due to Schaeffer in \cite{Sch97} between Eulerian maps and a particular kind of trees, \emph{balanced Eulerian trees}.

A \emph{plane tree} is a planar map with only one face. Vertices of degree $1$ are called \emph{leaves} and the others \emph{inner vertices}. An edge is said to be \emph{inner} if it is between two inner vertices. We root our trees by distinguishing a leaf instead of a half-edge. 

An \emph{Eulerian tree} is a rooted plane tree with leaves colored black or white such that:
\vspace{-0.3cm}
\begin{itemize}
    \item all inner vertices have even degree,
    \item each vertex of degree $2i$ is adjacent to exactly $i-1$ white leaves,
    \item the root leaf is black.
\end{itemize}
See Figure~\ref{fig:tree-word} for an example.

Two Eulerian trees are \emph{conjugated} if one can be obtained from the other by changing which black leaf is the root.

A Eulerian tree has degree distribution $\tuple{d}$ if it has exactly $d_i$ inner vertices of degree $2i$ for all $i$. 
Notice that such a tree has $\sum_{i>0} d_i$ inner vertices, one less inner edges and $\sum_{i>0}(i-1)d_i$ white leaves. Therefore, by double counting the edges, its number of black leaves is $\sum_{i>0}(2i-i+1)d_i-2(\sum_{i>0}d_i-1) = \sum_{i>0}(i-1)d_i +2$, i.e. a Eulerian tree has exactly $2$ more black leaves than white leaves.

To define balanced trees and describe Schaeffer's bijection we introduce the word associated to a Eulerian tree. The \emph{word} associated to a Eulerian tree $\gt$ is the word $\omega(\gt)$ on the alphabet $\{b, w\}$ obtained by following the border of the tree in counterclockwise direction, ending at the root leaf, and writing $b$ when a black leaf is seen and $w$ for a white one (see Figure~\ref{fig:tree-word}). Conjugated trees lead to conjugated words, which means that if $\gt$ and $\gt'$ are conjugated, there are two words $\omega_1$ and $\omega_2$ such that $\omega(\gt)=\omega_1\omega_2$ and $\omega(\gt')=\omega_2\omega_1$.

These words can then be read as bracketing words, $b$ being the closing bracket and $w$ the opening bracket. A bracketing word is \emph{correct} if for every prefix $u$ of it, the number of letters $w$ in $u$ is greater or equal to the number of letters $b$ in $u$. In such a word, each opening bracket is closed by a closing one, we say that those brackets are \emph{matched}. This induces a (partial) matching on the leaves of the tree.  

Recall that in a Eulerian tree there are exactly two more black leaves than white leaves. It follows from the Cycle Lemma that in each conjugation class of Eulerian trees, there are exactly two rooted trees such that $\omega(\gt) = p_1bp_2b$ with $p_1$ and $p_2$ correct bracketing words. The two unmatched black leaves in a Eulerian tree are called the \emph{free leaves}.

A Eulerian tree is \emph{balanced} if it is rooted on a free leaf. We denote by $\mathcal{T}(\tuple{d})$ the set of balanced Eulerian trees of degree distribution $\tuple{d}$.

\begin{figure}[ht]
    \centering
    \includegraphics[width = 0.95\textwidth]{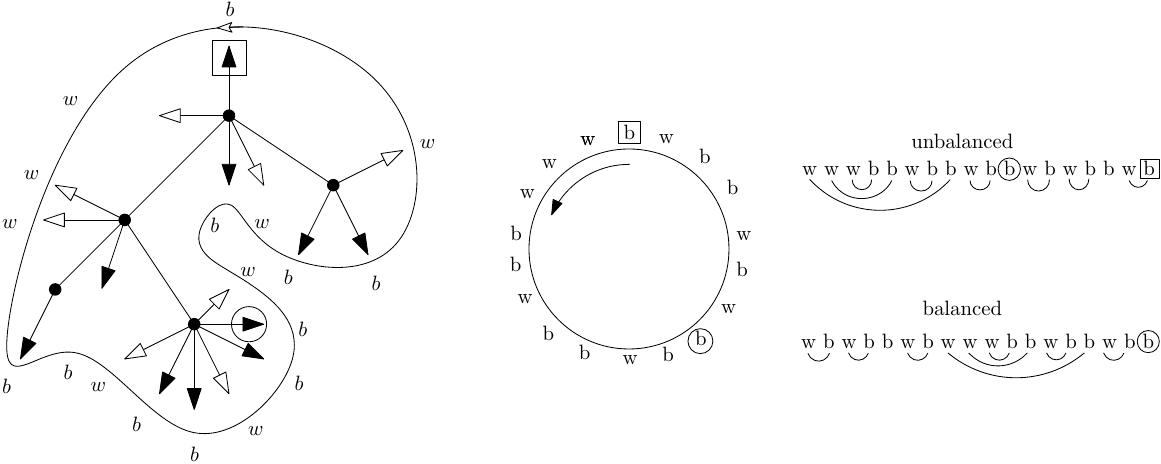}
    \caption{A Eulerian tree and the associated bracketing word with two different root choices, the square one being unbalanced and the circle one balanced. Leaves are represented by arrows and inner vertices by disks.}
    \label{fig:tree-word}
\end{figure}

\begin{thm}[\cite{Sch97}]
    For all $\tuple{d}$, there is a bijection $\close$ between balanced Eulerian trees of degree distribution $\tuple{d}$ and Eulerian maps of degree distribution $\tuple{d}$.
\end{thm}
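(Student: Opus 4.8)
The plan is to build the bijection $\close$ explicitly as a "closing" operation on balanced eulerian trees, which explains the notation: given a balanced eulerian tree $T$, we use its bracketing word $\omega(T)$ and the induced matching on leaves to glue matched leaves together, producing a eulerian map. Concretely, first I would recall that $T$ is rooted on a free leaf, so $\omega(T) = p_1 b p_2 b$ where the final $b$ is the root leaf and $p_1 b p_2$ is a correct bracketing word of the remaining leaves; in particular every white (opening) leaf is matched to a unique black (closing) leaf, and there is exactly one unmatched black leaf besides the root, namely the one at the end of $p_1$. I would then define $\close(T)$ by identifying each matched pair of leaves (white with its partner black leaf) into a single point, contracting the two half-edges carrying these leaves into one edge of the map, and keeping the two free leaves: the root leaf becomes the root half-edge of the map (oriented, say, out of the inner vertex it hangs from), and the other free leaf is attached to it so that no information is lost. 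One must check that this gluing can be performed in the plane without crossings — this is exactly the classical fact that a correct (well-nested) bracketing word yields a non-crossing matching along the boundary of the tree — and that the resulting object is a connected planar map all of whose vertices have even degree, i.e. a eulerian map, with the degree of the vertex coming from an inner vertex of degree $2i$ unchanged (the $i-1$ white leaves absorbed there are balanced by $i-1$ black leaves arriving from elsewhere, but a careful half-edge count around each inner vertex shows the degree is preserved), so $\close(T) \in \mathcal{E}(\tuple{d})$.

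For the inverse, I would describe the "opening" operation $\open$: starting from a eulerian map $M \in \mathcal{E}(\tuple{d})$, perform a depth-first exploration of a spanning tree chosen canonically (for instance the tree of first-visited edges in the contour exploration starting from the root), cutting every non-tree edge into two half-edges, each receiving a leaf; color the leaves black or white according to the rule dictated by the word (the "entering" occurrence white, the "returning" occurrence black, or equivalently according to which side of the cut edge is met first), and root on the appropriate free leaf. The key points to verify are that this produces a plane tree whose inner vertices still have even degree, that the coloring makes it a eulerian tree with exactly $i-1$ white leaves at each $2i$-valent vertex, and that the root leaf one lands on is indeed free so the tree is balanced. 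Then I would check $\open \circ \close = \mathrm{id}$ and $\close \circ \open = \mathrm{id}$: in one direction, cutting along exactly the edges that were glued recovers $T$ together with its matching and its root; in the other, the non-crossing structure of the contour exploration guarantees that the glued pairs are precisely the matched pairs of the associated word, so re-gluing reconstructs $M$.

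Since this is Schaeffer's theorem \cite{Sch97}, an alternative and lighter route — which I would actually favor for the writeup — is to cite the construction from \cite{Sch97} directly and only spell out the dictionary needed later: that $\close$ maps the degree distribution $\tuple{d}$ of the tree to the same degree distribution of the eulerian map, that the free leaves correspond to the root structure, and that conjugacy classes of eulerian trees correspond bijectively (two-to-one from all trees, one-to-one from balanced ones after the standard cycle-lemma argument) to rooted eulerian maps. In that case the "proof" reduces to: (i) recall Schaeffer's closing map; (ii) observe it preserves degree distribution by the half-edge count above; (iii) invoke the cycle lemma / Łukasiewicz-path argument to see that each conjugacy class of eulerian trees contains exactly two balanced representatives matching the two free leaves, and that these biject with the rootings of the corresponding eulerian map.

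The main obstacle I expect is the planarity and well-definedness of the gluing: one must argue carefully that identifying matched leaf-pairs along the non-crossing matching produced by a correct bracketing word can be realized as an embedded map on the sphere, and that the cyclic order of half-edges around each vertex is the correct one — in particular that the two free leaves play symmetric enough roles that rooting on either gives a genuine inverse to the map-to-tree exploration. Equivalently, on the map side, the delicate point is checking that the canonical spanning tree / contour exploration cuts the map into a tree whose boundary word is exactly a correct bracketing word with the prescribed leaf colors; this is where the evenness of all vertex degrees (the eulerian condition) is genuinely used, and it is the heart of why the correspondence is with eulerian rather than arbitrary maps.
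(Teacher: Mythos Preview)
The paper does not prove this theorem: it is stated with a citation to \cite{Sch97}, and the paper only \emph{describes} the two mappings (Closure and Opening) so as to use them later. So your ``lighter route'' --- cite Schaeffer and spell out the dictionary --- is exactly what the paper does, and is the appropriate level here.

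Your description of the closure matches the paper's: merge each white leaf with the black leaf it is matched to in the bracketing word, then merge the two free leaves into the root edge (oriented from the root leaf). Your worry about degree preservation is overcomplicated, though: no half-edge incident to an inner vertex is created or destroyed by the gluing of leaves, so a vertex of degree $2i$ in $T$ trivially has degree $2i$ in $\close(T)$.

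Your opening, however, diverges from the paper's. The paper's $\open$ is not a DFS-spanning-tree-then-cut-the-rest procedure; it is a specific greedy walk: cut the root into two black leaves, set the current half-edge to the \emph{successor} of the root (next half-edge counterclockwise around the end vertex, ignoring leaves), and repeatedly (i) if the current half-edge is not a bridge, cut it, putting a black leaf at its origin and a white leaf at its end, then (ii) move to the successor. The order of cuts matters because ``bridge'' is relative to the current partially opened map, and the leaf colours are determined by origin/end in this walk, not by an entering/returning rule in a precomputed spanning tree. Your DFS description is in the right spirit but is not the same algorithm, and your colouring convention would need to be checked against this one. Since the paper relies on the exact form of $\open$ later (e.g.\ in Proposition~\ref{prop:tree}, showing $\tree(M)$ is dual to $\open(M^*)$), it is worth recording the paper's procedure rather than a variant.
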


Schaeffer's mappings are the following:

\noindent\textbf{Closure:}
Let $\gt \in \mathcal{T}(\tuple{d})$ be a balanced Eulerian tree.
\begin{enumerate}
    \item Going counterclockwise around the tree, merge each black leaf with the last unmerged white leaf seen. This is the same matching as the one done in the bracketing word.
    \item Merge the two free leaves and root the map on this edge oriented from the root leaf to the other.
\end{enumerate}
\begin{figure}[ht!]
    \centering
    \includegraphics[height=4cm]{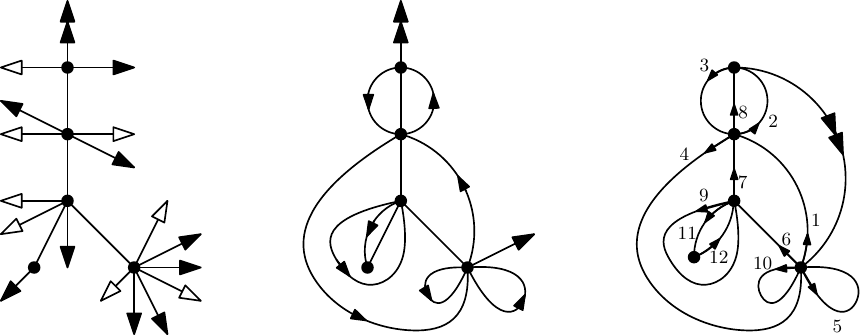}
    \caption{A Eulerian tree and its closure. Numbers and arrows on the map indicate the order and the direction in which the edges are explored during the opening.}
    \label{fig:tree closure}
\end{figure}
We denote by $\close(\gt)$ the result, called the \emph{closure} of $\gt$. See Figure~\ref{fig:tree closure} for an example. Observe that the closures of two conjugated balanced Eulerian trees are the same map up to the orientation of the root edge.

\begin{lemma}
    \label{lem:forget orient}
    Let $\gt$ be a balanced Eulerian tree and denote by $\gt'$ its balanced conjugate. Then $\sigma(\gt')$ is the map obtained by changing the orientation of the root of $\sigma(\gt)$.
\end{lemma}
\begin{proof}
    Recall that the words of $\gt$ and $\gt'$ can be written as $\omega(\gt)=\omega_1b\omega_2b$ and $\omega(\gt')=\omega_2b\omega_1b$. So a letter $w$ is matched to the same $b$ in both word, and so the closure merges a given white leaf with the same black leaf on both trees. The only difference occurs in the last step, in which the first free leaf seen changes and so opposite orientations are chosen for the root edge.
\end{proof}

\smallskip
We now describe the reverse mapping $\open$. A \emph{bridge} is an edge whose deletion disconnects the map. The \emph{successor} of a half-edge is defined as the half-edge that follows it when turning in counterclockwise direction around its end vertex, ignoring leaves. This definition is illustrated in Figure~\ref{fig:successor}.

\begin{figure}[ht!]
    \centering
    \includegraphics[height=2cm]{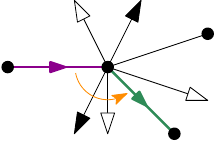}
    \caption{The successor of the purple half-edge is the green one.}
    \label{fig:successor}
\end{figure}

\noindent\textbf{Opening:}
Let $\gm \in \mathcal{E}(\tuple{d})$ be a Eulerian map. Cut the root edge in two, turn each half into a black leaf and mark its origin as the root of the future tree. Then set the current half-edge, $e$, to the successor of the root edge. While the map is not a tree, repeat the following steps:
\begin{enumerate}
    \item If $e$ is not a bridge then cut it, put a white leaf at its origin and a black leaf at its end.
    \item Set $e$ to its successor.
\end{enumerate}

The result, $\open(\gm)$, is called the \emph{opening} of $\gm$. Observe that the set of inner edges of $\open(\gm)$, i.e. the edges of $\gm$ that were not cut, form a spanning tree of $\gm$. We denote by $\optree(\gm)$ this spanning tree, which we call the \emph{opening tree} of $\gm$.

Finally, let us state properties of these bijections that will allow us to convert the \ref{eq:vertex} and the \ref{eq:face} to identities on balanced Eulerian trees, which are easier to manipulate.
\begin{lemma}[\cite{Sch97}]
\label{lem:corresp}
    Duality and Schaeffer's bijection $\close$ induce a correspondence between the following elements:
    \[\arraycolsep=0pt
    \begin{array}{ccccc}
        \textbf{Bipartite map } & \xleftrightarrow{duality} & \textbf{Eulerian map} & \xleftrightarrow{\ \close \ } & \textbf{Balanced Eulerian tree} \\
        \hline
        \text{Face of degree } 2i & \leftrightarrow & \text{Vertex of degree } 2i & \leftrightarrow &  \text{Inner vertex of degree } 2i \\
        \text{Vertex of degree } k & \leftrightarrow & \text{Face of degree } k & \leftrightarrow & \text{Black leaf } \\
        \text{Edge } uv & \leftrightarrow & \text{Edge between } u^* \text{and } v^* & \leftrightarrow & \text{Edge or pair of matched leaves} \\
    \end{array}\]
\end{lemma}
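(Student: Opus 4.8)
The plan is to read each of the three rows of the table as a composition of two correspondences that are understood separately: the one carried by duality (first column $\leftrightarrow$ second column) and the one carried by Schaeffer's bijection $\close$ (second column $\leftrightarrow$ third column). The duality part is nothing more than the earlier proposition: duality is a degree-preserving bijection exchanging vertices and faces, and by definition of the dual it sends an edge $uv$ of a bipartite map to the edge $e^{*}$ of the eulerian map separating the two faces $u^{*}$ and $v^{*}$. So there is nothing to do for the first column, and the whole content is to follow what $\close$, or equivalently its inverse the opening map $\open$, does to vertices, faces and edges. Since Schaeffer's theorem already guarantees that $\close$ and $\open$ are mutually inverse bijections, it suffices to inspect one of them on each kind of cell; I will apply $\open$ to a eulerian map $M$ and track things cell by cell, using $\close\circ\open=\mathrm{id}$ freely.

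For the vertex row this is immediate: the opening map never does anything to a vertex except attach newly created leaves to it — cutting the root edge replaces it by two pendant edges to new black leaves, and a loop step replaces a non-bridge edge by a black-leaf pendant at its origin and a white-leaf pendant at its end — so every vertex of $M$ keeps its degree and none of the new vertices has degree more than $1$. Hence the inner vertices of $\open(M)$ are exactly the vertices of $M$ with the same degrees, which is the first row; and dually, in $\close$ merging two leaves only deletes the leaves and joins their former neighbours by an edge, so inner vertices and their degrees are again untouched.

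For the edge row, call an edge of $M$ cut if it is cut during the opening (including the root edge). Since $\close$ reconstructs $M$ from $\open(M)$ by merging each matched pair of leaves and then the free pair, the edges re-formed this way are exactly the cut edges of $M$; an edge that is never cut keeps both of its endpoints and is never incident to a leaf, and conversely every edge of $\open(M)$ joining two inner vertices is of this form. Thus an edge of $M$ is either an edge of the tree (if uncut) or a pair of matched leaves, the root edge being the image of the free pair, which is the third row. The counts match as a check: with $n,v,f$ the numbers of edges, vertices and faces of the eulerian map $M$, the tree has $v$ inner vertices, $f$ black leaves and $\sum_i (i-1)d_i = n-v = f-2$ white leaves, hence $n+f-1$ edges, of which $2f-2$ are incident to a leaf and $n-f+1$ join two inner vertices; on the other side the number of faces drops from $f$ to $1$ along the opening, one unit per cut, so there are $f-1$ cut edges (the root plus $f-2$ non-bridge cuts) and $n-f+1$ uncut ones.

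The remaining row — the vertices of the bipartite map, the faces of the eulerian map, the black leaves of the tree — is the delicate point, and here I would argue through $\close$. Run the closure as the reverse sequence of "uncuts": starting from the one-faced tree $T$, merging a matched pair of leaves, or the free pair, removes two vertices and one edge, so by Euler's formula it increases the number of faces by exactly one. Hence $\close$ creates the $f$ faces of $\close(T)$ one at a time: assign to each of the $f-2$ matched-pair merges and to the free-pair merge the face it seals off, and assign the one remaining face — the surviving descendant of the unique face of $T$, which one checks is the external face of $\close(T)$ — to the root leaf. This gives the desired bijection between faces of the eulerian map and black leaves of the tree, and composing with duality gives the first and second cells of the row. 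The main obstacle is exactly this last step: one has to verify that "the face sealed off by a merging step" is well defined — this is where the non-crossing structure of the bracketing matching is used, ensuring that each uncut genuinely splits one face of the current map into two in a planar way — and that the bookkeeping of which black leaf is sent to which face (in particular the root leaf to the external face) is consistent, so that marking a vertex of the bipartite map does correspond canonically to marking a black leaf of the tree.
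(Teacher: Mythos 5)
Your proposal is correct and follows essentially the same route as the paper: the paper also dismisses the vertex and edge rows as clear and handles the face/black-leaf row by observing that each leaf-matching closes a face, with the two faces created at the final merge assigned to the two free leaves (root face to root leaf). Your Euler-formula count of one new face per merge and the edge bookkeeping just make explicit what the paper leaves implicit by citing Schaeffer's observation.
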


\begin{proof}
    Every correspondence is clear except for the black leaves of the balanced Eulerian tree and the faces of the Eulerian map. This correspondence was  already noticed by Schaeffer in \cite{Sch97}, we remind it here. 

    Consider the closure of a balanced Eulerian tree. When a non-free black leaf is merged with a white leaf, a face is closed. Associate this face to the black leaf. When the two free leaves are merged together, two faces are closed, associate the outer face to the root leaf and the other face to the other free leaf.
\end{proof}

\section{A bijection for the marked vertex identity}
\label{sec:bijection vertex}

In this section, we give a bijective proof of the \ref{eq:vertex} 
\[4(f(\tuple{d})-1)B(\tuple{d}) = \sum_{\tuple{s}+\tuple{t}=\tuple{d}} v(\tuple{s})v(\tuple{t})B(\tuple{s})B(\tuple{t}). \]

\subsection{Interpretation in terms of trees}

Using the correspondence of Lemma~\ref{lem:corresp}, the right-hand side of the \ref{eq:vertex} counts couples of balanced Eulerian trees $(\gt_1, \gt_2) \in \mathcal{T}(\tuple{s}) \times \mathcal{T}(\tuple{t})$ with $\tuple{s} + \tuple{t} = \tuple{d}$, each having a marked black leaf. For the left-hand side, observe that since $f(\tuple{d})$ is the number of inner vertices of a balanced Eulerian tree $\gt \in \mathcal{T}(\tuple{d})$, $f(\tuple{d}) -1$ is its number of inner edges. We then interpret the $4(f(\tuple{d})-1)$ factor as a marked oriented inner edge and a sign.

The idea will be to cut the big balanced Eulerian tree at the marked edge. The orientation of the edge will then be used to choose which of the resulting Eulerian trees is the first one, and the sign will be used to choose on which of its two free leaves the second resulting tree will be rooted. The following lemma provides a way to convert a sign into a choice of a free leaf.

\begin{lemma}[Rerooting]
\label{lem:reroot}
    There is a bijection between balanced Eulerian trees with a marked black leaf and general Eulerian trees with a sign.
\end{lemma}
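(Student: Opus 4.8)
The idea is to set up a two-to-one type correspondence (or rather, an exact bijection after accounting for the sign) between the two sides. Recall that a general eulerian tree of degree distribution $\tuple{d}$ has exactly two \emph{free} black leaves, and that re-rooting at either one of them produces a balanced eulerian tree; these are the two balanced representatives of the conjugacy class. So, given a general eulerian tree $T$ with a marked black leaf $\ell$ together with a sign $\varepsilon \in \{+,-\}$, I would first describe how to extract a balanced tree. There are two distinguished black leaves available, namely the two free leaves $f_+$ and $f_-$ of $T$; I will use a convention (say, $f_+$ is the one appearing first when reading $\omega(T)$ from the current root, $f_-$ the second) to order them. The sign $\varepsilon$ then selects one of the two free leaves, and re-rooting $T$ at $f_\varepsilon$ yields a balanced eulerian tree $T'$. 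The marked black leaf $\ell$ of $T$ is carried along: it becomes a marked black leaf of $T'$. Note that $\ell$ is allowed to coincide with $f_\varepsilon$ (the new root leaf) or with the other free leaf.

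**The inverse map.** Conversely, suppose I am given a balanced eulerian tree $T'$ with a marked black leaf $\ell$. The root leaf $r'$ of $T'$ is, by definition, one of the two free leaves of its conjugacy class; call the other free leaf $r''$ (this is well-defined: it is the unique other black leaf such that $\omega$ read from it has the form $p_1 b p_2 b$ with both $p_i$ correct). Now I re-root $T'$ at $\ell$ — wait, that is not quite right, since $\ell$ need not be free. Instead: the pair $(T', \ell)$ determines a conjugacy class with a distinguished free leaf (the current root $r'$) and a distinguished black leaf $\ell$; to recover $(T, \ell, \varepsilon)$ I must re-root at $\ell$ and remember which of the two free leaves was the root. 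Concretely, re-root at $\ell$ to obtain $T$, keep $\ell$ marked, and set $\varepsilon = +$ if $r'$ is the first free leaf of $T$ read from $\ell$ and $\varepsilon = -$ otherwise. One checks directly that this is inverse to the forward map: the forward map reads off $f_\varepsilon$ from $(T,\ell,\varepsilon)$, which is exactly $r'$, re-roots there to get $T'$; the inverse map re-roots back at $\ell$ and recovers the sign from the position of $r'$ among the free leaves of $T$.

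**Main obstacle.** The delicate points are all bookkeeping about the free leaves: I must check (i) that in a general (not necessarily balanced) eulerian tree the two free leaves are still well-defined and can be canonically ordered relative to the current root, which follows from the two-more-black-leaves count and the discussion preceding the definition of balanced trees; (ii) that re-rooting a general eulerian tree at one of its free leaves does indeed land in $\mathcal{T}^*(\tuple{d})$, which is essentially the definition of "free"; and (iii) that marking and the choice of free leaf interact cleanly when $\ell$ happens to be one of the free leaves — in that edge case the constructions degenerate but remain bijective. I expect (iii), handling the coincidences, to be the only place requiring genuine care; everything else is a direct unwinding of the definitions of "free leaf", "balanced", and "conjugate" given above. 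Since re-rooting at any of the (two) free leaves is the only freedom and the sign has exactly two values, counting confirms the bijection: each conjugacy class of general eulerian trees with a marked black leaf, taken with a sign, has $2 |\{\text{black leaves}\}|$ representatives, matching the $2 \times |\{\text{black leaves}\}|$ count of balanced trees with a marked black leaf in that class.
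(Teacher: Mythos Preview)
Your proposal is correct and follows essentially the same approach as the paper: re-root the balanced tree at its marked leaf to obtain a general eulerian tree, recording as a sign which of the two free leaves was the original root (and conversely, use the sign to pick the free leaf at which to re-root). One terminological point worth tightening: on the ``general eulerian tree with a sign'' side there is no marked leaf in addition to the root --- your $\ell$ is simply the root of the general tree, so the phrase ``keep $\ell$ marked'' after re-rooting at $\ell$ is redundant, and your final count $2\,|\{\text{black leaves}\}|$ per conjugacy class is really $(\text{choices of root}) \times (\text{choices of sign})$.
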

\begin{proof}
    Let $\gt$ be a balanced Eulerian tree with root leaf $r$ and a marked leaf $\ell$. Reroot $\gt$ at $\ell$ and set the sign to $+$ if $r$ is the first free leaf encountered when following the border of $\gt$ clockwise starting from $\ell$, and to $-$ otherwise. 

    The reverse construction consists in rerooting the tree on the first free leaf encountered when following the boundary of the tree in the direction given by the sign, and marking the former root leaf.
\end{proof} 


We can now build the bijection between balanced Eulerian trees with a marked oriented inner edge and a sign, and pairs of balanced Eulerian trees with a marked black leaf each. An example is illustrated in Figure~\ref{fig:bij v trees}.

\textbf{The merging $\varphi$: }
Let $(\gt_1, \ell_1)$ and $(\gt_2, \ell_2)$ be two balanced Eulerian trees with a marked black leaf. Let $\varepsilon_i$ be the sign obtained when applying the Lemma~\ref{lem:reroot} to $(\gt_i, \ell_i)$. Let $\gt$ be the tree obtained by merging the two marked leaves into an edge, $e$. Mark the edge $e$ and orient it from $\gt_1$ to $\gt_2$. Notice that for each inner vertex, its degree and the number of white leaves it is adjacent to remain the same. Thus $\gt$ is an unrooted Eulerian tree and its degree distribution is the sum of those of $\gt_1$ and $\gt_2$. Root $\gt$ at the first free leaf seen when following the border of the tree starting at the right of $\vec{e}$, going counterclockwise if $\varepsilon_1 = +$ and clockwise otherwise. This yields a balanced Eulerian tree $\gt$. Return $\varphi((\gt_1,\ell_1),(\gt_2,\ell_2)) \coloneqq (\gt, \vec{e}, \varepsilon_2)$. 

\textbf{The separation $\psi$:}
For the reverse mapping, let $(\gt, \vec{e}, \varepsilon)$ be a balanced Eulerian tree with a marked oriented inner edge and a sign. Starting at the right of $\vec{e}$, follow the border of $\gt$ until a free leaf is encountered. If it is the root leaf then set $\varepsilon'=+$, otherwise set $\varepsilon'=-$. Cut the edge $e$, put a black leaf at each of its ends and mark these leaves. Denote by $\gt_1$ the tree that contains the origin of $e$ and by $\gt_2$ the other. Again, since for each inner vertex its degree and the number of white leaves it is adjacent to remain the same, both trees are unrooted Eulerian trees. Root $\gt_1$ using $\varepsilon'$ and $\gt_2$ using $\varepsilon$ as described in Lemma~\ref{lem:reroot}. Return $\psi(\gt, \vec{e}, \varepsilon) \coloneqq ((\gt_1, \ell_1), (\gt_2, \ell_2))$ where $\ell_i$ is the marked leaf of $\gt_i$.

\begin{figure}[ht]
    \centering
    \includegraphics[page=2, height=5cm]{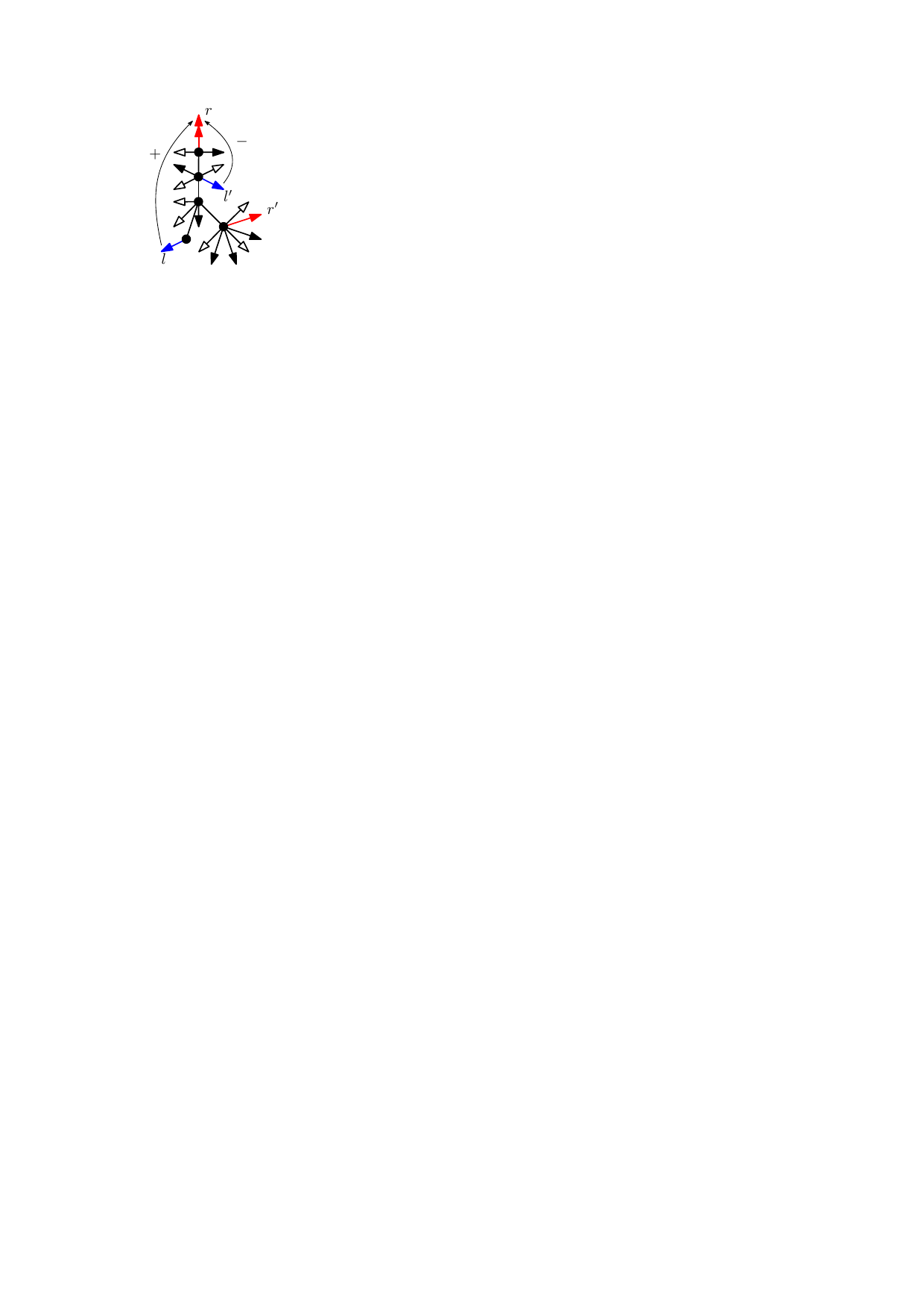}
    \caption{An example of the bijection for the marked vertex identity on balanced Eulerian trees. Double arrows are the root leaves and circled arrows are the other free leaves.}
    \label{fig:bij v trees}
\end{figure}

\begin{prop}
    The mappings $\varphi$ and $\psi$ are inverse of each other.
\end{prop}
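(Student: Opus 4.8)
The plan is to show that $\psi \circ \varphi = \mathrm{id}$ and $\varphi \circ \psi = \mathrm{id}$ by carefully tracking the combinatorial data (the underlying unrooted tree, the marked edge with its orientation, the two signs, and the choices of root leaf) through both constructions. The key observation, already noted inside the definitions, is that merging two marked black leaves into an edge and, conversely, cutting an edge and capping both ends with black leaves are inverse operations at the level of the underlying unrooted trees with their leaf-colourings: inner vertices keep their degrees and their counts of adjacent white leaves, so eulerianness is preserved in both directions, and the decomposition $\tuple{d} = \tuple{s} + \tuple{t}$ matches up. So the whole content is in checking that the root leaves and signs are recovered correctly.

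First I would verify $\psi \circ \varphi = \mathrm{id}$. Start from $(T_1, l_1), (T_2, l_2)$, form $(T, \vec e, \varepsilon_2) = \varphi((T_1,l_1),(T_2,l_2))$, and apply $\psi$. Cutting $e$ in $T$ recovers the unrooted trees underlying $T_1$ and $T_2$, with the capped black leaves sitting exactly where $l_1$ and $l_2$ were, and the orientation of $\vec e$ tells us which piece is $T_1$. It then remains to see that $\psi$ reassigns the correct roots. Here the crucial point is the relationship between the sign $\varepsilon_1$ produced by the rerooting lemma applied to $(T_1,l_1)$ and the sign $\varepsilon'$ recomputed by $\psi$ by walking the border of $T$ from the right of $\vec e$ until a free leaf is met. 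One must argue that, because $T$ was rooted by following the border of $T$ starting at the right of $\vec e$ (counterclockwise if $\varepsilon_1 = +$, clockwise otherwise) until the first free leaf, walking that same border in $\psi$ lands on the root of $T$ exactly when $\varepsilon_1 = +$, hence $\varepsilon' = \varepsilon_1$; then $\psi$ roots $T_1$ using $\varepsilon'=\varepsilon_1$ by the same recipe the rerooting lemma uses to go backwards, recovering the original root of $T_1$, while it roots $T_2$ using $\varepsilon = \varepsilon_2$, recovering the root of $T_2$. Thus $(l_1, l_2)$ and both roots come back, giving the identity.

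Then I would do the reverse, $\varphi \circ \psi = \mathrm{id}$, which is symmetric: starting from $(T, \vec e, \varepsilon)$, cutting along $e$ and then merging the two marked capped leaves clearly restores the unrooted tree, the edge $e$, and its orientation; the sign $\varepsilon$ is carried through untouched as the second sign; and one checks that the root of $T$ is recovered because $\varphi$ re-roots by the very border walk from the right of $\vec e$ whose outcome $\varepsilon'$ was recorded by $\psi$, so the direction (counterclockwise vs clockwise) and the target free leaf agree. I expect the main obstacle to be precisely this bookkeeping around the rerooting lemma: one has to be careful that "following the border starting at the right of $\vec e$" in $T$ and "following the border clockwise from $l_i$ to decide $\varepsilon_i$" in $T_i$ refer to compatible traversals once the edge $e$ is contracted to (or expanded from) the leaf $l_i$, and that the free leaves of $T$ are in the expected correspondence with the free leaves of $T_1$ and $T_2$ — in particular that no new free leaf is created and none destroyed by the merge, which again follows from the leaf-colouring being preserved. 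Once these traversal-compatibility facts are pinned down, both composite identities follow by direct inspection, so I would structure the proof as: (i) underlying-tree inverse, (ii) free-leaf correspondence, (iii) sign and root recovery for each composition.
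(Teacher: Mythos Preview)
Your plan is the right one, and in fact the paper offers no proof of this proposition at all --- it is stated immediately after the two definitions and left as self-evident. So any correct write-up along your lines is already more than the paper provides.

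There is, however, one wrong step in your outline. You claim that under the merge ``no new free leaf is created and none destroyed'', and that this ``follows from the leaf-colouring being preserved''. This is false: each $T_i$ has exactly two free leaves, four in total, whereas the merged tree $T$ again has exactly two; being free is a global property of the bracketing word, not a local property of leaf colour, and gluing the two words at $l_1,l_2$ genuinely reshuffles which black leaves are free. The good news is that no correspondence between the free leaves of $T$ and those of the $T_i$ is needed. What you actually use is only: (a) cutting $e$ returns the \emph{same unrooted} $T_1,T_2$, hence their own free leaves are the original ones; (b) since $T$ has exactly two free leaves, the border walk in $\psi$ from the right of $\vec e$ meets the root of $T$ first if and only if $\varphi$ walked counterclockwise, i.e.\ $\varepsilon'=\varepsilon_1$; (c) the rooting rule ``as in $\varphi$'' applied to $T_i$ from $l_i$ with sign $\varepsilon_i$ inverts the rerooting lemma and lands on the original root $r_i$. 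The reverse composition is symmetric. So your items (i) and (iii) already carry the whole proof; item (ii) as you stated it is both false and unnecessary and should simply be dropped.
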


We now have a bijective proof of the \ref{eq:vertex}, but it uses several intermediate step (through balanced Eulerian trees). In what follows, we investigate how to decompose bipartite maps directly, without transforming them into Eulerian trees.

\subsection{The bijection on bipartite maps}

Let us now go back to bipartite map and the \ref{eq:vertex}. First, we should understand to what kind of marked element the coefficient $4(f(\tuple{d})-1)$ corresponds to. On a Eulerian map $\gm$, it corresponded to a marked oriented inner edge of the opening $\open(\gm)$ of $\gm$, i.e. to an oriented edge in a special spanning tree $\optree(\gm)$ (the opening tree) of the Eulerian map. The following duality result allows us to transfer this to bipartite maps.

Let $\gm$ be a planar map and let $S \subset E(\gm)$ be a subset of its edges. The dual edge set of $S$ is defined as $S^* = \{e^* \mid e \in E(\gm)\setminus S\} \subset E(\gm^*)$. In the special case where $S$ is a spanning tree, the following result holds.

\begin{prop}[\cite{GraphTheory}]
\label{prop:tree dual}
    Let $\gm$ be a planar map and let $S$ be a spanning tree of $\gm$. Then $S^*$ is a spanning tree of the dual map $\gm^*$ of $\gm$.
\end{prop}

So if $\gm \in \cB(\tuple{d})$, the marked element of $\gm$ should be an oriented edge $e$ such that $e \notin \gt$ where $\gt = \optree(\gm^*)^*$ is a spanning tree of $\gm$. We claim that $\optree(\gm^*)^*$ is the breadth first search tree of $\gm$, which is built as follows. 

Let $\gm$ be a bipartite map. Initialize the current half-edge $\vec{e}$ to the root edge of $\gm$, the set of selected edges $T \coloneqq \varnothing$ and the set of cut edges $C \coloneqq \varnothing$. While $T \cup C \neq E(\gm)$, do as follows:

\begin{enumerate}
    \item[--] If $e \in T$, let $v$ be the end vertex of $\vec{e}$. Set $\vec{e}$ to the half-edge that follows $e$ when turning clockwise around $v$ and ignoring edges in $C$, oriented away from $v$ (this might be the reverse orientation of $\vec{e}$).
    \item[--] Else, if adding $e$ to $T$ does not create a cycle then set $T \coloneqq T \cup \{e\}$; else set $C \coloneqq C \cup \{e\}$. In both cases, set $\vec{e}$ to the next half-edge encountered when turning clockwise around the origin of $\vec{e}$ and ignoring edges in $C$.
\end{enumerate}
Denote by $\tree(\gm)$ the submap of $\gm$ with edge set $T$, called \emph{BFS tree} of $\gm$. By construction, $\tree(\gm)$ has no cycle. Furthermore, as $\gm$ is connected, when the procedure terminates, all edges have been seen. Thus, all vertices of $\gm$ have degree at least one in $\tree(\gm)$ since the first edge incident to a given vertex explored cannot create a cycle in $T$, so it is always added. Therefore, $\tree(\gm)$ is a spanning tree of $\gm$. We delay the proof that $\tree(\gm)$ is the dual of $\optree(\gm^*)$ to the next section (Proposition~\ref{prop:tree}).

\hfill

We now build a bijection between bipartite maps with degree distribution $\tuple{d}$, a marked oriented edge $\vec{e}$ that is not in their BFS tree and a sign; and couples of bipartite maps with a marked vertex each and respective degree distributions $\tuple{s}$ and $\tuple{t}$ such that $\tuple{s} + \tuple{t} = \tuple{d}$. We denote by $\cB^\bullet(\tuple{d})$ (resp. $\cB^\rightarrow(\tuple{d})$) the set of bipartite maps with degree distribution $\tuple{d}$ and a marked vertex (resp. a marked oriented edge that is not in their BFS tree).



\smallskip

\noindent\textbf{Procedure $\sew$ (Slit and Sew):}

Let $(\gm_1, v_1)\in \mathcal{B}^\bullet(\tuple{s})$ and $(\gm_2, v_2)\in \mathcal{B}^\bullet(\tuple{t})$ be two bipartite maps with a marked vertex each. Denote $\gt_i \coloneqq \tree(\gm_i)$. Their sewing $\sew((\gm_1, v_1), (\gm_2, v_2))$ is built as follows (see Figure~\ref{fig:v sew ex} for an example).
\begin{enumerate}
    \item \textbf{Slit.} For $i = 1, 2$, let $\pp_i$ be the unique path in $\gt_i$ between $v_i$ and the endpoint of the root of $\gm_i$ the farther from $v_i$. Let $k = \min(|\pp_1|, |\pp_2|)$. Slit the first $k$ edges of $\pp_i$ in $\gm_i$ starting from $v_i$, dividing each edge into two semi-edges. This creates in each map a ``hole" of length $2k$, bordered by semi-edges, which we will refer as the border of the map. 
    \item \textbf{Recolor.} If $v_1$ and $v_2$ have the same color, swap the colors in $\gm_2$ and set $\varepsilon = -$. Otherwise set $\varepsilon = +$.
    \item \textbf{Sew.} Sew together the borders of the two maps, positioning them such that $v_1$ is identified with the vertex that follows $v_2$ when following the border of ${\gm}_2$ with the ``hole'' to the right. Denote by $\gm$ the resulting map. Notice that, thanks to the recolor step, when two vertices are merged, they have the same color so the resulting coloring is well defined and proper.
    \item \textbf{Root.} If $|\pp_1| \neq |\pp_2|$, then exactly one of the root edges of the original maps $\gm_i$ was not slit, root the map $\gm$ on it. Otherwise, $|\pp_1| = |\pp_2|$, root the map $\gm$ on the edge opposite to $v_1v_2$ on the sewed cycle. In both cases, the orientation of the root edge is given by the coloring.  
    \item \textbf{Marking.} Mark the edge $\vec{e} \coloneqq v_1v_2$, oriented from $v_1$ to $v_2$.
    \item Return $\sew((\gm_1, v_1), (\gm_2, v_2)) \coloneqq (\gm, \vec{e}, \varepsilon)$.
\end{enumerate}


\begin{figure}[ht!]
    \centering
    \includegraphics[width=\textwidth]{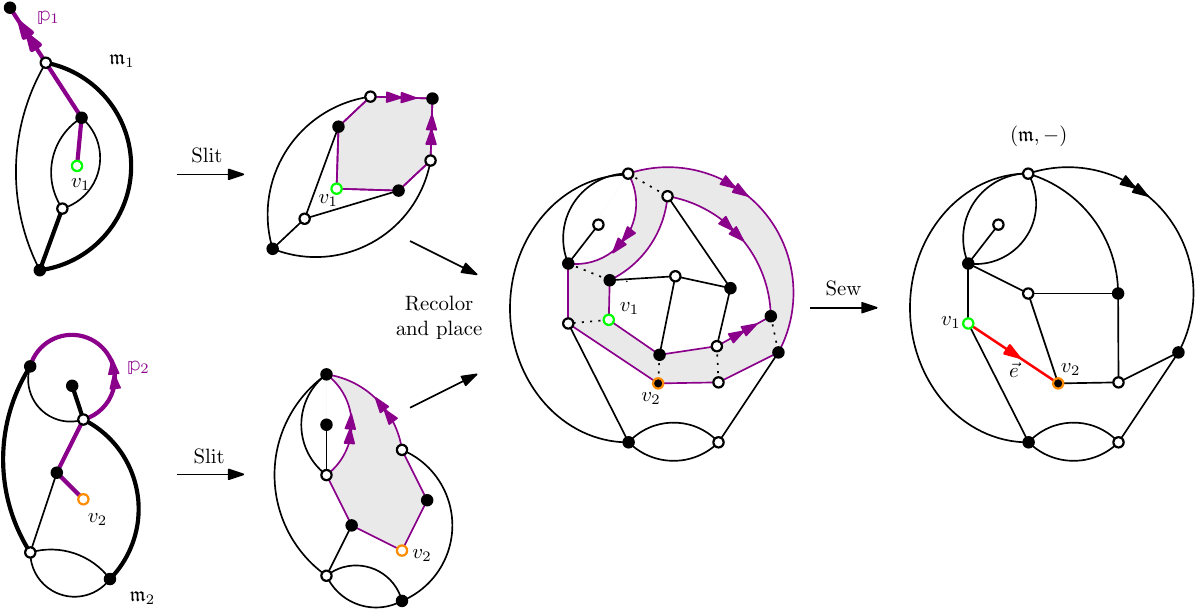}
    \caption{An example of the Slit and Sew procedure $\sew$. The bold edges form the spanning trees $\gt_i$ and the purple edges are the slit paths $\pp_i$.}
    \label{fig:v sew ex}
\end{figure}

The reverse mapping is the following.

\noindent\textbf{Procedure $\cut$ (Cut and Close):}

Let $(\gm, \vec{e}) \in \mathcal{B}^{\rightarrow}(\tuple{d})$ be a bipartite map with a marked oriented edge $\vec{e}$ that is not in its BFS spanning tree $\tree(\gm)$ and a sign $\varepsilon$. Its cut $\cut(\gm, \vec{e}, \varepsilon)$ is built as follows (see Figure~\ref{fig:v cut ex} for an example).
\begin{enumerate}
    \item Denote by $v_1$ the origin of $\vec{e}$ and $v_2$ its end. Let $\cc$ be the unique cycle formed by $e$ and edges of $\tree(\gm)$.
    \item \textbf{Cut.} Cut the map $\gm$ along $\cc$, and denote by $\gm_1$ the side to the left of $\vec{e}$ and by $\gm_2$ the other side. Transfer the marking $v_i$ to $\gm_i$. 
    \item \textbf{Close.} For $i = 1,2$, close the map $\gm_i$ by sewing together the two paths between $v_i$ and the vertex opposite to it in $\cc$.
    \item \textbf{Recolor.} If $\varepsilon = -$, swap the colors of $\gm_2$.
    \item \textbf{Root.} For $i = 1,2$, if $\gm_i$ inherited the root edge of $\gm$, root $\gm_i$ on it. Else root $\gm_i$ at the last edge of the path closed (opposite from $v_i$). 
    \item Return $\cut((\gm, \vec{e}, \varepsilon)) \coloneqq((\gm_1, v_1), (\gm_2, v_2))$.
\end{enumerate}

\begin{figure}[ht!]
    \centering
    \includegraphics[width=0.9\textwidth]{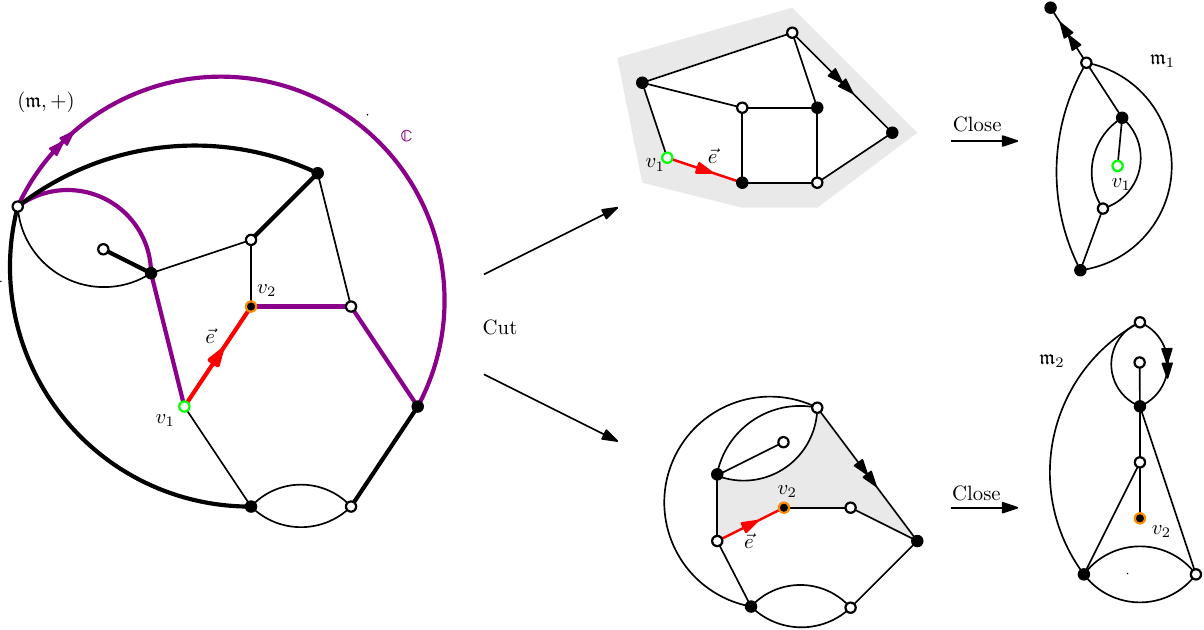}
    \caption{An example of the Cut and Close procedure $\cut$. The bold edges form the spanning tree $\gt$ and the purple edges are the cut cycle $\cc$.}
    \label{fig:v cut ex}
\end{figure}

\begin{thm}
    \label{thm:equiv}
    The mappings $\sew$ and $\cut$ are inverse of each other and induce a bijection between $\cB^\rightarrow(\tuple{d})\times\{\pm\}$ and $\bigcup_{\tuple{s}+\tuple{t}=\tuple{d}}\cB^\bullet(\tuple{s})\times\cB^\bullet(\tuple{t})$ for every degree distribution $\tuple{d}$. 
    
    Furthermore, they are equal to the mapping $\merge$ and $\sepa$ on the trees composed with duality and closure (denoted $\sigma$). In other words, the following diagram commutes:
    \[\arraycolsep=1pt\def\arraystretch{2}
    \begin{array}{ccccc}
        \bigcup_{\tuple{s}+\tuple{t}=\tuple{d}}\cB^\bullet(\tuple{s})\times\cB^\bullet(\tuple{t}) & \xleftrightarrow{duality} & \bigcup_{\tuple{s}+\tuple{t}=\tuple{d}}\cE^\bullet(\tuple{s})\times\cE^\bullet(\tuple{t}) & \xleftrightarrow{~\close~} & \bigcup_{\tuple{s}+\tuple{t}=\tuple{d}}\cT^\bullet(\tuple{s})\times\cT^\bullet(\tuple{t}) \\
        \sew \bigg\downarrow \bigg\uparrow \cut &&&& \merge \bigg\downarrow \bigg\uparrow \sepa \\
        \cB^\rightarrow(\tuple{d})\times\{\pm\} & \xleftrightarrow{duality} & \cE^\rightarrow(\tuple{d})\times\{\pm\} & \xleftrightarrow{~\close~} & \cT^\rightarrow(\tuple{d})\times\{\pm\} \\
    \end{array}
    \]
    where $\cE^\bullet$ is the set of Eulerian maps with a marked face, $\cE^\rightarrow$ is the set of Eulerian maps with a marked oriented edge in their opening tree, $\cT^\bullet$ is the set of balanced Eulerian trees with a marked black leaf, and $\cT^\rightarrow$ is the set of balanced Eulerian trees with a marked oriented inner edge.
    
\end{thm}

This will be a corollary of Propositions~\ref{prop:sew} and~\ref{prop:cut} which state that each of the procedures on the trees and on the bipartite maps have the same effect.


\subsection{The two pairs of mappings have the same result}
\label{sec:proof}

This section is dedicated to the proof of Theorem~\ref{thm:equiv}.

More precisely, let $(\gt_1^*, \ell_1)$ and $(\gt_2^*, \ell_2)$ be two balanced Eulerian trees with a marked black leaf. Let $(\gt^*, \vec{e^*}, \varepsilon)$ be the result of their merging. For $i=1, 2$, let $\gm_i^*$ be the closure $\close(\gt_i^*)$ of $\gt_i^*$ and let $f_i$ be the face the leaf $\ell_i$ is mapped to. Then let $\gm_i$ be the map which has $\gm_i^*$ as its dual (i.e. $\gm_i^{**}$ with its root edge orientation reversed) and $v_i \coloneqq f_i^*$. Let $\gm$ and $\gm^*$ be the maps obtained from $\gt$ in the same way. In the rest of this section, we show $\sew$ maps $((\gm_1, v_1), (\gm_2, v_2))$ to $(\gm, \vec{e}, \varepsilon)$ and $\cut$ maps $(\gm, \vec{e}, \varepsilon)$ to $((\gm_1, v_1), (\gm_2, v_2))$. In other words, we prove that the following diagram commutes: 
\begin{equation}
\label{eq:diagram}
    \def\arraystretch{2}
    \arraycolsep=3pt
\begin{array}{ccccc}
    ((\gm_1,v_1), (\gm_2, v_2)) & \xleftrightarrow{\text{ duality }} & ((\gm^*_1, v^*_1), (\gm^*_2, v^*_2)) & \xleftrightarrow{\text{closure }\close} & (\gt^*_1, \ell_1) + (\gt^*_2, \ell_2) \\
    \sew \bigg\downarrow \bigg\uparrow \cut &&&& \merge \bigg\downarrow \bigg\uparrow \sepa \\
    (M, \vec{e}, \varepsilon) & \xleftrightarrow{\text{ duality }} & (\gm^*, \vec{e^*}, \varepsilon) & \xleftrightarrow{\text{closure }\close} & (\gt^*, \vec{e^*}, \varepsilon)
\end{array}
\end{equation}

\subsubsection{The spanning trees are dual of each other.}

In this section, we prove that for any bipartite map $\gm$, the BFS tree $\tree(\gm)$ of $\gm$ is the dual of the opening tree $\optree(\gm^*)$ of $\gm^*$ (as set of edges of $\gm$ and $\gm^*$ respectively).

\begin{prop}
\label{prop:tree}
    For every bipartite map $\gm$, the BFS tree $\tau(\gm)$ of $\gm$ is the dual of the opening tree $\optree(\gm^*)$ of $\gm^*$.
\end{prop}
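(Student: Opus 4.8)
The plan is to deduce Proposition~\ref{prop:tree} directly from the preceding lemma by examining the terminal state of the two slow procedures run in tandem. First I would observe that both $\tree(M)$ and $\open(M^*)$ are deterministic procedures that terminate: $\tree$ terminates when every edge has been classified (added to $T$ or deleted) so that the remaining map is a tree, and $\open$ terminates when the partially opened map $M^*_k$ becomes a tree. The lemma guarantees that at every common step $k$ the visited half-edge of $\open(M^*)$ is the dual $e_k^*$ of the half-edge $e_k$ visited by $\tree(M)$, and that $e_k \in T_{k+1}$ if and only if $e_k^*$ is cut at step $k+1$. Since the sequences of visited half-edges are dual to each other and both procedures visit half-edges until no decisions remain, the two procedures terminate at the same step, say step $N$.

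Next I would argue about the final edge sets. Write $T = \tree(M) = T_N$ for the spanning tree produced, and let $C = C_N$ be the set of edges of $M^*$ cut during $\open(M^*)$; the complement $E(M^*) \setminus C$ is, by construction, the edge set of the tree $\open(M^*)$ (together with its decorating leaves, which correspond to the cut half-edges but are not genuine new edges). The second bullet of the lemma, applied at every step up to $N$, gives $T = \{e \in E(M) : e^* \in C\}$, equivalently $C = \{e^* : e \in T\} = (E(M) \setminus T)^*$ in the notation $S^* = \{e^* \mid e \in E \setminus S\}$ introduced just before Proposition~\ref{prop:tree dual}. Hence the edge set of $\open(M^*)$ is $E(M^*)\setminus C = T^*$, which is precisely the dual $T^* = \tree(M)^*$ of the spanning tree $\tree(M)$. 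By Proposition~\ref{prop:tree dual} this dual is indeed a spanning tree of $M^*$, so there is no inconsistency, and I would conclude that $\tree(M)$ is the dual of the opening $\open(M^*)$, as claimed. Finally, I would remark that passing from the slow procedures back to the original (leaf-ignoring) ones changes nothing: the decorating leaves are exactly the half-edges at which the slow procedures merely advance the current half-edge, so the set of genuine edges added or cut, and the final trees, are identical.

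The main obstacle is not the logic of the terminal-step argument, which is essentially a one-line corollary once the lemma is in hand, but rather making airtight the bookkeeping about \emph{when} the two procedures halt and why ``the current half-edge of $\open$ is a decorating leaf exactly when $e_k \in T_k$''. One must be careful that the slow procedures traverse half-edges in lockstep even across the steps where one of them is looking at a leaf; the induction in the lemma handles this, but I would want to state explicitly that both procedures visit exactly the same number of half-edge-slots (one per corner incidence encountered by the clockwise/counterclockwise walk), so that ``step $k$'' is well defined for both simultaneously and the common termination is forced. Once that is granted, the identification $\open(M^*) = \tree(M)^*$ of edge sets — hence of rooted plane trees, since both inherit their root and plane structure from the common root edge via duality — follows immediately.
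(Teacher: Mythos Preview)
Your proposal is correct and follows exactly the paper's approach: the paper's entire proof is the single sentence ``Applying this lemma to the last step of the procedures, we deduce that\ldots'', and you carry out precisely this terminal-step deduction, with additional (and accurate) bookkeeping about simultaneous termination and the equality $E(M^*)\setminus C = T^*$ of edge sets.
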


\begin{proof}
    Let $\gm$ be a bipartite map. We prove the result by running simultaneously the procedures $\tree$ and $\open$ on respectively $\gm$ and $\gm^*$, and showing by induction that the edges cut in $\gm^*$ by $\open$ are exactly the dual of those added to $T$ by $\tree$. 
    
    To make the proof easier, we consider for the construction of each tree $\tree(\gm)$ and $\optree(\gm^*)$ the slower procedure that does not ignore the cut edges (resp. leaves) but can take them as current edge. If a cut edge (resp. leaf) is the current edge, the slow procedures simply changes the current edge to the next edge turning around the vertex (clockwise for $\tree$ and counterclockwise for $\open$).

    Denote by $T_k$, $C_k$ and $e_k$ respectively the set $T$, the set $C$ and the current half-edge at the beginning of step $k$ in $\tree(M)$. Denote by $\gm^*_k$ the partially opened map of the beginning of step $k$ in $\open(\gm^*)$ and by $S_k$ the set of edges that have been cut in $\gm^*_k$. We show by induction on the step $k$ that, at every step, $T_k = \{e^* \mid e\in S_k\}$ and the current half-edge in $\open$ is the dual of $e_k$, with compatible orientation (see Figure~\ref{fig:next edge}).

    When the procedures start, $T_0 = S_0 = \varnothing$ and the current half-edges of the procedures $\tree$ and $\close$ are the root edges of respectively $\gm$ and $\gm^*$, which are dual to each other. 

    Assume that the statement holds at step $k$. 

    \begin{figure}[ht]
        \centering
        \includegraphics[page=4, height=2.7cm]{Figures/eulerian_trees/bij_v_trees.pdf}
        \caption{The next edge for the two procedure in the different cases. The bipartite map $\gm$ is in plain purple edge and round vertices, and the dual Eulerian map $\gm^*$ is in dashed teal edges and square vertices.}
        \label{fig:next edge}
    \end{figure}

    If $e_k \in T_k$, then $e_k^* \in S_k$ was cut. Thus the current edge of $\open$ is a leaf. The procedure $\open$ will only change the current edge to the next edge around the origin vertex of $e_{k+1}^*$ counterclockwise, while $\tree$ will change it to the next edge going clockwise around the face at the right of $e_k$. These two half-edges are dual to each other (see Figure~\ref{fig:next edge}, left). 

    Similarly, if $e_k \in C_k$, then ${{e}_k}^*$ is a bridge in $\gm^*_k$. Indeed, this means that $e_k$ forms a cycle $\cc$ with edges of $T_i$ for some $i<k$. Then as $T_i \subset T_k$, all the dual of the other edges in $\cc$ have been cut in $\gm^*_k$, so $e_k^*$ is a bridge. Thus $\tree$ takes for $\vec{e}_{k+1}$ the next edge around the origin of $\vec{e}_k$ clockwise, while $\open$ takes the next edge around the end of $e_k^*$, which is $e_{k+1}^*$ (see Figure~\ref{fig:next edge}, middle).

    Now assume $e_k \notin T_k\cup C_k$. If $e_k$ is not added to $T$, then it forms a cycle $\cc$ with edges in $T_k$. Then, by the same argument as above, $e_k^*$ is a bridge, and so it is not cut and not added to $S$. Therefore, $T_{k+1} =T_k$, $S_k = S_{k+1}$ and as above, the next current edges of the procedures are dual. Conversely, if $e_k^*$ is not added to $S$, it is a bridge in $\gm^*_k$ and so $e_k$ forms a cycle with edges of $T_k$ and is not added to $T$. Moreover, when this is not the case, $e_k$ is added to $T_{k+1}$ and $e_k^*$ is cut, so we have $T_{k+1} = T_k\cup\{e_k\} = \{e \mid e\in S_k\cup{e_k^*}=S_{k+1}\}$. Finally, in this case $e_{k+1}$ is the next edge when turning clockwise around the origin of $e_k$, and the new current edge of $\open$ is the edge following $e_k^*$ going counterclockwise around the face to its right, i.e. the dual of $e_{k+1}^*$ (see Figure~\ref{fig:next edge}, right).
\end{proof}

\subsubsection{The Merging and the Slit and Sew procedures have the same result.}

In this section, we prove the following statement:
\begin{prop}
    \label{prop:sew}
    For any bipartite maps $\gm_1$ and $\gm_2$ with a marked vertex each, the Slit and Sew procedure $\sew$ gives the same marked map as taking the dual of each map, opening them, merging the trees with $\merge$, closing the result and taking its dual.
\end{prop}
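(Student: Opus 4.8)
The plan is to read off the proposition by transporting the two lemmas of this subsection, together with Proposition~\ref{prop:tree}, around the square
$$(M_1,v_1)+(M_2,v_2) \xleftrightarrow{\text{duality}} (M_1^*,v_1^*)+(M_2^*,v_2^*) \xleftrightarrow{\ \close\ } (T_1^*,l_1)+(T_2^*,l_2),$$
where $T_i^* = \open(M_i^*)$ and $l_i$ is the black leaf that Lemma~\ref{lem:corresp} associates to the marked vertex $v_i$. First I would use Proposition~\ref{prop:tree} to see that the spanning tree $\tree(M_i)$ slit by $\sew$ is the dual of $T_i^*$, so the path $P_i$ from $v_i$ to the far endpoint of the root of $M_i$ corresponds under duality to the distinguished family of matched leaf pairs of $T_i^*$ separating the leaf $b_0$ attached to $v_i$ from the free leaves --- exactly the pairs erased by $\merge$. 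The first lemma of the subsection then shows every other matched pair of $T_i^*$ closes the same way in the merged tree, which dually says $\sew$ touches no edge outside $P_1\cup P_2$; and since $\open(M_i^*)$ is balanced and $\close$ sends balanced trees to eulerian maps, everything is well defined and the degree distributions add up as required.

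Next I would feed in the second lemma, which already performs the core computation: it writes the border words of $T_1^*$ and $T_2^*$ in the normal form $q_k w_k\cdots w_0\, b_1'\cdots b_{k'+2}'\, w_{k'}'\cdots w_0'\, b_1\cdots b_{k+2}$, checks that the leaf matching produced by $\close\circ\merge\circ\open$ is the prefix-by-prefix pairing $w_j\leftrightarrow b_{j+1}'$, $w_j'\leftrightarrow b_{j+1}$, identifies this with the half-edge identification of the Sew step, and verifies that the map is rooted on the same edge in all three regimes $|P_1|<|P_2|$, $|P_1|=|P_2|$, $|P_1|>|P_2|$ (the surviving original root edge, the antipode of $v_1v_2$ on the sewn cycle, or the last reglued edge of the longer path). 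Translating through duality with Lemma~\ref{lem:corresp} (edges and matched leaf pairs of the tree $\leftrightarrow$ edges of $M^*$ $\leftrightarrow$ edges of $M$), this says precisely that $\sew(M_1,M_2)$ and the dual of $\close\bigl(\merge(\open(M_1^*),\open(M_2^*))\bigr)$ have the same underlying rooted map, and that the distinguished edge $\overrightarrow{v_1v_2}$ is the dual of the edge $e$ produced by $\merge$.

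It then remains to reconcile the one surviving decoration, the sign. On the tree side $\merge$ records $\varepsilon_2$ and roots the merged tree using $\varepsilon_1$ via the rerooting bijection of Lemma~\ref{lem:reroot}; on the map side the only nontrivial freedom left is the coloring, which step~2 of $\sew$ fixes by a possible color swap of $M_2$ and encodes as $\varepsilon$. Here I would invoke the observation already stated just above the proposition, together with the fact recalled in Section~\ref{sec:setting} that a bipartite map's root orientation is equivalent data to its $2$-coloring: keeping the coloring of $M_1$ is the same as rooting with $\varepsilon_1$, and comparing the colors of $v_1$ and $v_2$ is --- through duality and $\close$, which send the canonical vertex $2$-coloring of $M$ to the face-coloring of $M^*$ and then to the bracket-nesting parity in the tree --- the map-side avatar of the sign $\varepsilon_2$ read off by Lemma~\ref{lem:reroot}. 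Combining these three steps gives the commuting square, hence the proposition.

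The part I expect to be most delicate is this last reconciliation: making precise that the color of $v_i$ in $M_i$'s canonical coloring corresponds, under duality followed by $\close^{-1}=\open$, to the datum ``$r_i$ is, or is not, the first free leaf met when walking the border of $T_i^*$ clockwise from $l_i$'' that defines $\varepsilon_i$ --- that is, checking that the two readings of the sign (as a choice of which free leaf is the root, and as the relative color of the marked vertices) genuinely coincide with matching $\pm$ conventions, and similarly that the rooting convention ``first free leaf to the right of $\vec e$, counterclockwise iff $\varepsilon_1=+$'' matches the case split of step~4 of $\sew$. Once these sign and root conventions are pinned down, the proposition is just the direct transport of the two lemmas through duality.
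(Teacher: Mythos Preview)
Your approach is essentially the same as the paper's: the proposition is deduced directly from the two lemmas of the subsection together with the observation about the root orientation being encoded by the coloring of $M_1$ (i.e.\ by $\varepsilon_1$), transported through duality via Proposition~\ref{prop:tree} and Lemma~\ref{lem:corresp}. The paper's own proof is a one-line ``from the last lemma and the observation''; you are simply unpacking that line, and in fact you are more explicit than the paper about the remaining sign bookkeeping --- the paper only comments on $\varepsilon_1$ and leaves the identification $\varepsilon=\varepsilon_2$ implicit, whereas you correctly flag that matching the color-comparison sign of $\sew$ with the rerooting sign of Lemma~\ref{lem:reroot} is the one point requiring care.
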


Let $\gm_1$ and $\gm_2$ be two maps with a marked vertex $v_i$. Recall the notations defined at the beginning of the section \eqref{eq:diagram}. 

The procedures $\sew$ and $\close\circ\merge\circ\open$ both cut some edges of the maps: $\close\circ\merge\circ\open$ cuts them perpendicularly into two half-edges while $\sew$ slits them into two semi-edges, and then rematch them differently. Observe that a semi-edge of a map corresponds to a half-edge of its dual map. We proceed by proving that both procedures return the same edge matchings.

First, let us show that when applying $\close\circ\merge\circ\open$ to $((\gm_1^*, v_1^*), (\gm_2^*, v_2^*))$, the set of half edges that are not matched with the same half edge in $\gm_i$ and in $\gm$ is contained in the sets $\{e^*\mid e\in \pp_i\}$ where $\pp_i$ is the path between $v_i$ and the root edge of $\gm_i$ in $\gt_i \coloneqq \tree(\gm_i)$. In other words, the only edges that may be changed by $\close\circ\merge\circ\open$ are the duals of the edges that are slit in the Slit step of $\sew$.

\begin{lemma}
\label{lem:match sew}
    Let $b$ and $w$ be two leaves of one of the $\gt_i^*$ that are matched together by $\close$ into an edge $e^*$ of $\gm^*_i$. If the dual edge $e$ of $e^*$ is not on the path $\pp_i$ between $v_i$ and the root edge in $\gt_i$ then $b$ and $w$ are still matched together in $\gt^*$.
\end{lemma}

\begin{proof}
    Without loss of generality, assume that $b$ and $w$ are leaves of $\gt_1$. 
    
    First notice that, since $\gt$ and $\gt^*$ are dual (as set of edges of respectively $\gm$ and $\gm^*$), $\pp_1$ is exactly the set of edges $e_j$ such that, denoting $b_j$ and $w_j$ respectively the black and white leaf that form $e_j^*$, the marked black leaf $\ell_1$ of $\gt_1$, is between $w_j$ and $b_j$ in the word $\omega(\gt_1)$ obtained by reading the boundary of $\gt_1$ (plus the root edge). Therefore, the border of $\gt_1$ reads $q_kw_kq_{k-1}\ldots w_1q_0w_0p_0\ell_1p_1b_1 \ldots b_{k+1}p_{k+1}b_{k+2}$ were $b_{k+1}$ and $b_{k+2}$ are the free leaves of $\gt_1$ and the $p_i$'s and $q_i$'s are correct bracketing words. When merging the two trees together, all those subwords are preserved so their closure remains the same.
\end{proof}

We can now prove the commutation stated in Proposition~\ref{prop:sew}. 

\begin{proof}[Proof of Proposition~\ref{prop:sew}]
    Lemma~\ref{lem:match sew} states that the edges modified by $\close\circ\merge\circ\open$ are contained in the duals of those slit in the Slit step of $\sew$. We now need to check that the semi-edge matching done by the Sew step of $\sew$ is the same as the half-edge matching from $\close\circ\merge\circ\open$, and that the choice of the root half-edge is the same. Figure~\ref{fig:bij v root} illustrates the parallel matchings in the two situations.

    Recall that closure and duality associate to each black leaf of $\gt_i^*$ a vertex of $\gm_i$. When drawing the tree $\gt_i^*$ on top of the bipartite map $\gm_i$ and considering leaves as half edges with their parents as their origin, the vertex one black leaf is mapped to is the vertex that lies to its right. Therefore, merging the two marked leaves of the trees corresponds to sewing together the two semi-edges following the vertices $v_i$ when following the border of the slit map $\gm_i$ with the cut to the right. 

    Writing the border of each trees $\gt_i^*$ as in the proof of Lemma~\ref{lem:match sew}, the border of the map obtained by closing the unchanged edges in $\gt^*$ reads  $w_{k_1} \ldots w_0 b_1' \ldots b_{{k_2}+1}'b_{{k_2}+2}'$ $w_{{k_2}}' \ldots w_0'b_1 \ldots b_{{k_1}+2}$, where the letters with a prime come from $\gt_2^*$ and the others from $\gt_1^*$. So the closure on the tree $\gt^*$ matches $w_j$ to $b_{j+1}'$ for $j \in \{0, \ldots \min(k_1, k_2+1)\}$ and $w_j'$ to $b_{j+1}$ for $j \in \{0, \ldots \min(k_2, k_1+1)\}$. This corresponds to sewing the borders of the two maps together starting from the vertices $v_i$, stopping when a root vertex is reached. If $k_1=k_2$, we are left with only two black leaves which become the root edge, at the position opposite to the edge $v_1v_2$ on the sewed cycle. Otherwise without loss of generality $k_1>k_2$ and the remaining leaves on the border read $w_{k_1} \ldots w_{{k_2}+2}b_{{k_2}+2} \ldots b_{{k_1}+2}$. The closure matches $w_j$ to $b_j$ for $j\in \{{k_2}+2\ldots k\}$ and $b_{{k_1}+1}$ to $b_{{k_1}+2}$ which forms the root. This corresponds to sewing back part of the path $\pp_1$ between $v_1$ and the root edge of $\gm_1$ so that the slit section has the same length as the other path. 

    Notice that the orientation of the root of $\gm$ is determined by the coloring of $\gm_1$, which is the same as rooting $\gt^*$ using $\varepsilon_1$ as root sign (recall from Lemma~\ref{lem:forget orient} that changing which free leaf is the root leaf only reverses the orientation of the root edge of the closure).
\end{proof}

\begin{figure}[ht]
    \centering    \includegraphics[width=0.95\textwidth]{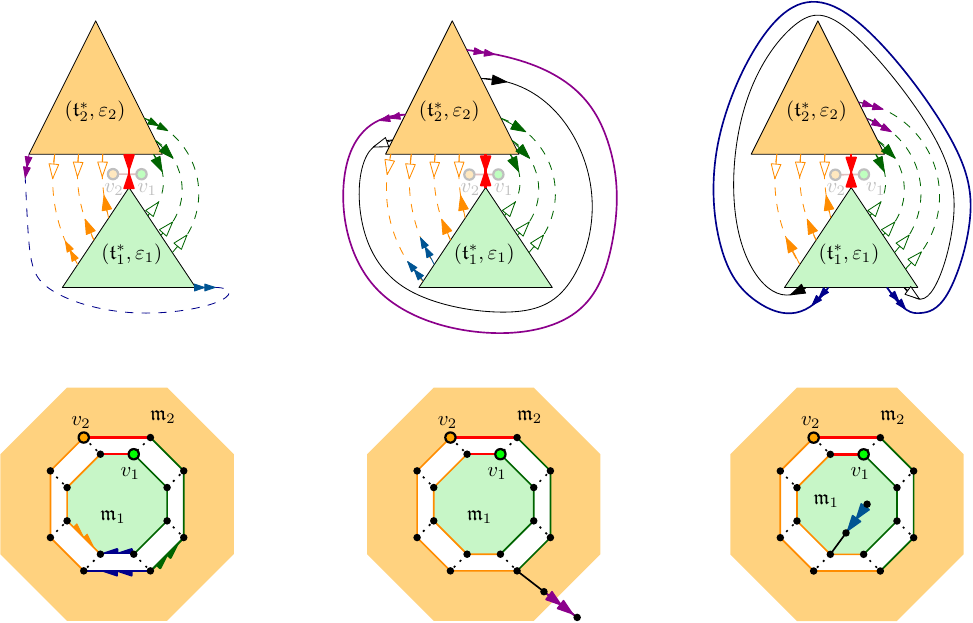}
    \caption{\textbf{Top:} The different possibilities for the map obtained by merging the trees $\gt_1^*$ and $\gt_2^*$ and performing the partial closure of the pairs of matched leaves which do not belong to a path $\pp_i$. The remaining leaves are represented by arrows and leaves that were free is either $\gt_1$ or $\gt_2$ by double arrows. The lines connect pairs matched leaves, and are dashed when the matching is different from the one in $\gt_i^*$. Left: $k_1=k_2$. Middle: $k_1<k_2$. Right: $k_1 > k_2$. \\
    \textbf{Bottom:} How the slit edges of $\gm_1$ and $\gm_2$ are sewed by $\sew$, depending on the relative length of the slit paths $\pp_i$.\\ Left: $|\pp_1|=|\pp_2|$. Middle: $|\pp_1|<|\pp_2|$. Right: $|\pp_1|>|\pp_2|$. \\
    A given bipartite map situation corresponds to the tree situation on top of it.}
    \label{fig:bij v root}
\end{figure}

\subsubsection{The Separation and the Cut and Close procedures have the same result.}

In this section, we prove the same result for the Cut and Close and the Separation proccesses.

\begin{prop}
    \label{prop:cut}
    For any bipartite map $\gm$ with a marked oriented edge, the Cut and Close procedure $\cut$ gives the same marked maps as taking the dual map, opening it with Schaeffer's bijection, separating the tree with $\sepa$, closing the two resulting trees and taking their dual.
\end{prop}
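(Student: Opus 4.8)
The plan is to verify that the right-hand square of the commuting diagram of Section~\ref{sec:proof} commutes in the direction $M \mapsto (M_1, M_2)$: starting from $(M, \vec e, \varepsilon)$, dualizing yields $(M^*, e^*)$, opening yields a balanced eulerian tree $T^* = \open(M^*)$ carrying the distinguished edge $e^*$, and one must check that applying $\sep$ to $(T^*, e^*, \varepsilon)$, closing the two resulting trees and dualizing reproduces exactly the pair $(M_1, M_2)$, with the correct markings, produced by $\cut$. The bijectivity assertion then follows by combining this with the analogous statement for $\sew$ (Proposition~\ref{prop:sew}) and the fact that $\merge$ and $\sep$ are mutually inverse.

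The first step is to use Proposition~\ref{prop:tree} to transport all combinatorial data between the two sides: $\tree(M)$ is the dual spanning tree of $\open(M^*)$, so the fundamental cycle $C$ of $\vec e$ with respect to $\tree(M)$ is dual to the cocycle $C(e^*)$ of $e^*$ in $T^*$, and the path in $\tree(M)$ between an endpoint of $e$ and the root endpoint is dual to the path in $T^*$ between an endpoint of $e^*$ and the root leaf. This identifies the "cut locus" of $\cut$ — the cycle $C$ together with the rooting path — with precisely the set of specially treated matched leaf pairs of $T^*$ described in the second of the three preceding lemmas. From there the three lemmas do all the work and one need only assemble them: the first lemma shows that every matched leaf pair of $T^*$ lying outside $C(e^*)$ and outside the root path is matched the same way in $T^*$ and in the corresponding $T_i^*$, so dually the part of $M$ away from $C$ is untouched by both constructions and the two sides can differ only along $C$; the second lemma supplies the parity and counting ($|C(e^*)|$ even, the number of leftover black leaves in each $M_i^*$, and the grouping of leftover leaves by colour on each border), which is what makes the rerooting well defined and determines which map inherits the root of $M$; and the third lemma matches, in each case ($|P_1| = |P_2|$ versus $|P_1| \neq |P_2|$, equivalently whether the root of $M^*$ lies in $C(e^*)$ or has both ends on one side), the half-edge identifications and the rooting of $\cut$ with those of $\close \circ \sep \circ \open$. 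Finally, the sign $\varepsilon$ is handled through Lemma~\ref{lem:reroot} together with the dictionary of Lemma~\ref{lem:corresp}: swapping the colours of $M_2$ when $\varepsilon = -$ is exactly dual to choosing the other free leaf of $T_2^*$ as its root.

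The main obstacle is not a single computation but the rooting-and-colouring bookkeeping: one must check that "root the map at the last edge of the glued path" in $\cut$ coincides with the free-leaf root choice dictated by $\sep$ in every case, and that the colour swap governed by $\varepsilon$ is consistent with the canonical white-to-black root convention on bipartite maps after dualizing back. This is exactly where the parity statement of the second lemma is essential — it is what guarantees that each $T_i^*$ really is a \emph{balanced} eulerian tree, hence that each $M_i$ is a well-rooted bipartite map of the expected degree distribution, so that the right-hand sides of the diagram are genuinely elements of $\mathcal{B}(\tuple{s}) \times \mathcal{B}(\tuple{t})$ with $\tuple{s} + \tuple{t} = \tuple{d}$.
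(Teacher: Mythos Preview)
Your proposal is correct and follows the paper's approach exactly: the proposition is an immediate consequence of the third of the three preceding lemmas (the paper's own proof is the single sentence ``This lemma directly proves that the mappings are the same''), with Proposition~\ref{prop:tree} supplying the dual-tree dictionary and the first two lemmas isolating and counting the leaves affected by the cut. One small slip: the case split in that third lemma is phrased in terms of which $T_i^*$ contains the free leaves of $T^*$ (equivalently, where the root sits relative to $C(e^*)$), not in terms of $|P_1|$ versus $|P_2|$, which are objects from the $\sew$ side; your parenthetical rephrasing via the root and $C(e^*)$ is the correct one here.
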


First let us define one more notion. Let $e \in \gt$ be an edge in a spanning tree $\gt$ of a map $\gm$. Its \emph{cocycle}, denoted $C(e)$, is the set of the edges of $\gm$ whose endpoints are in different connected components in $\gt-e$. Notice that the duals of the edges of a cocycle form a cycle in the dual map $\gm^*$. This implies that in a Eulerian map, all cocycles have even cardinality (this can also be obtained by double counting the edges in each connected component).

Let $\gm$ be a bipartite map with a marked oriented edge $\vec{e}$. As before, we use the notations of diagram~\eqref{eq:diagram}. As in the previous section, we will investigate which edges are modified by each procedure, and then prove that both procedures do the same half-edge matching.

\begin{lemma}
\label{lem:match cut}
    Let $\ell$ be a leaf of $\gt^*$. If $\ell$ is not in the cocycle of $e^*$ nor on an edge whose dual is in one of the two paths in $\gt$ between an extremity of $e$ and the root of $\gm$, then $\ell$ is matched to the same leaf in $\gt^*$ and in $\gt_i^*$.
\end{lemma}

\begin{proof}
    The set of leaves that form the edges of $C(e^*)$ and the two paths in $\gt$ between the extremities of $e$ and the root of $\gm$ is exactly the set of matched pairs of leaves $(b,w)$ such that a side of $e^*$ is seen between $w$ and $b$ when following the border of the tree $\gt^*$ counterclockwise. Indeed, the edges of $C(e^*)$ have their extremities in different connected components of $\gt^*-e^*$ by definition. Moreover, since $\gt$ and $\gt^*$ are dual (as sets of edges), the dual edge of an edge on a path from an extremity of $e$ to the root of $\gm$ in $\gt$ is an edge of $E(\gm^*)\setminus \gt^*$ that ``separates $e^*$ from the outer face of $\gm^*$''. That is, such an edge either is in $C(e^*)$ or is an edge of $\gm^*$ that surrounds the connected component of $\gt^*-e^*$ that does not contain the root edge of $\gm^*$. 
    
    Denote $U$ this set of leaves. Let $\ell$ be a leaf that is not in $U$ and let $p$ be the longest subword of $\omega(\gt)$ containing $\ell$ but no leaf in $U$ and not crossing $e$. Then $p$ is a correct bracketing word and since $e^*$ is not in $p$, all its leaves end up in the same tree $\gt_i$. Thus $p$ is closed the same way in $\gt$ and $\gt_i$. So $\ell$ is matched to the same leaf.
\end{proof}

\begin{lemma}
\label{lem:partial close}
    Consider the maps $\widetilde{\gm}^*_1$ and $\widetilde{\gm}^*_2$ obtained after merging all pairs of leaves in respectively $\gt^*_1$ and $\gt^*_2$ except those that were in $C(e^*)$. Then if $|C(e^*)| = 2k$, the number of remaining black leaves in each map is the following:
    \begin{itemize}
        \item $k+1$ in each map if the root of $\gm^*$ is in $C(e^*)$.
        \item $k$ in $\widetilde{\gm}^*_1$ and $k+1$ in $\widetilde{\gm}^*_2$ if the root of $\gm^*$ has both its extremities in $\widetilde{\gm}^*_1$
        \item $k$ in $\widetilde{\gm}^*_2$ and $k+1$ in $\widetilde{\gm}^*_1$ if the root of $\gm^*$ has both its extremities in $\widetilde{\gm}^*_2$
    \end{itemize}
    Furthermore, when following the border of one of the $\widetilde{\gm}_i^*$ counterclockwise one sees all the white leaves then all the black leaves.
\end{lemma}
\begin{proof}
    Through the procedure, an edge of $C(e^*)$ become a white leaf in one map $\widetilde{\gm}_i$ and a black one in the other, except if it is $e$ or the root edge of $\gm^*$ in which case it gives a black leaf to each map. The numbers given then follow from the fact that the trees $\gt_i^*$ are Eulerian trees, so each must have 2 more black leaves than white leaves.

    For the last statement, for a pattern $wb$ to appear, the black leaf should come from the edge $e^*$, as otherwise the two leaves would not have been in $C(e^*)$. Since there is only one extremity of $e^*$ in each $\widetilde{\gm}_i^*$, there can be only one occurrence of $wb$.
\end{proof}

We now have all the ingredients to prove Proposition~\ref{prop:cut}.

\begin{proof}[Proof of Proposition~\ref{prop:cut}]
    Lemma~\ref{lem:match cut} states that the edges modified by $\close\circ\sepa\circ\open$ are contained in the duals of those cut in Cut step of $\cut$. All that remains is to check that the semi-edge matching done by the Close step of $\cut$ is the same as in $\close\circ\sepa\circ\open$, and that the root choice is also the same. Without loss of generality, lets consider $\gt_1^*$. 
    
    First assume that $\gt_1^*$ has at most one free leaf of $\gt^*$ and consider its border after closing the edges that are not in $C(e^*)$ (note that the path to the root is contained in $C(e^*)$ here). According to the Lemma~\ref{lem:partial close}, the border reads as $w_{k-1} \ldots w_1b_1 \ldots b_{k+1}$, with $b_1$ denoting the black leaf from $e$. So $w_j$ is matched with $b_j$ for $j<k$ and $b_k$ is matched with $b_{k+1}$, forming the root edge. This corresponds to closing the cycle starting at $v_1$ and rooting the map on the last edge of the path. 

    Now assume both free leaves of $\gt^*$ are in $\gt_1^*$. The border $\gt_1^*$ after partial closure then reads $w_{k+r} \ldots w_{k+1}$ $w_k \ldots w_1$ $b_1 \ldots b_kb_{k+1} \ldots b_{k+r+2}$ where the letters with indices lesser or equal to $k$ are from $C(e^*)$ and the other are from the path to the root. The later are matched back as in $\gm$ and the former are matched with $w_j$ on $b_j$, which correspond to closing the cycle starting from $v_1$ and keeping the root edge of $\gm$. 
\end{proof}

\section{A bijection for the marked face identity}
\label{sec:bijection face}

In this section, we give a bijective proof of the \ref{eq:face}:
\[(f(\tuple{d})-1)(f(\tuple{d})-2)B(\tuple{d}) = \sum_{\tuple{s}+\tuple{t}=\tuple{d}} (f(\tuple{s})-1)v(\tuple{t})(v(\tuple{t})-1)B(\tuple{s})B(\tuple{t}).\]

\subsection{Interpretation in trees}

Using the correspondence from Lemma~\ref{lem:corresp}, the \ref{eq:face} implies a bijection between balanced Eulerian trees $\gt \in \mathcal{T}(\tuple{d})$ with two marked edges (left-hand side) and couples of trees $(\gt_1, \gt_2) \in \mathcal{T}(\tuple{s}) \times \mathcal{T}(\tuple{t})$ with $\tuple{s} + \tuple{t} = \tuple{d}$, the first one having a marked edge and the second one two marked black leaves (right-hand side). This is very similar to the interpretation of the \ref{eq:vertex}, except that the signs and orientations are missing.

We start with a trick that allows us to turn the marked edge of $\gt_1$ into a marked oriented edge. Observe that Lemma~\ref{lem:forget orient} allows us to work on unrooted Eulerian trees without compromising the interpretation in terms of bipartite maps. We therefore forget the roots of $\gt$ and $\gt_2$, which divides both sides of the equation by $2$. Then for $\gt_1$, we orient the marked edge by following the border of the tree clockwise and orienting the edge in the direction it is first seen, unless the second free leaf was seen before in which case we use the reverse orientation. We can then forget this root too as it is recoverable from the oriented edge. 

The bijection on tree is then the following:

\textbf{Merging:}
Let $\gt_1$ be an unrooted Eulerian tree with a marked oriented edge $\vec{e}$ and $\gt_2$ be an unrooted Eulerian tree with two marked black leaves $\ell_1$ and $\ell_2$. The Eulerian tree $\gt$ is built as follows. 
\begin{enumerate}
    \item Cut edge $e$ and denote by $h_1$ and $h_2$ the black leaves corresponding to its origin and to its end respectively.
    \item For $i = 1,2$, merge $h_i$ and $\ell_i$ together and mark the edge obtained as $e_i$.
\end{enumerate}

\textbf{Separation:}
Let $\gt$ be an unrooted Eulerian tree with two marked edges $e_1$ and $e_2$. We separate it into two trees $\gt_1$ and $\gt_2$ as follows.
\begin{enumerate}
    \item Cut both edges $e_i$, putting a black leaf at each of their ends. This cuts $\gt$ into three parts, one having two leaves from the edges $e_i$, which we will denote $\gt_2$, and the other two having one leaf for an edge $e_i$ each. Denote by $h_i$ the end of $e_i$ on the side with one marked leaf and by $\ell_i$ the end on the side with two marked leaves (so in $\gt_2)$.
    \item Merge $h_1$ and $h_2$ together, mark the edge and orient it from $h_1$ to $h_2$. Let $\gt_1$ be the result.
\end{enumerate}

\begin{figure}[ht]
    \centering
    \includegraphics[height=3.5cm]{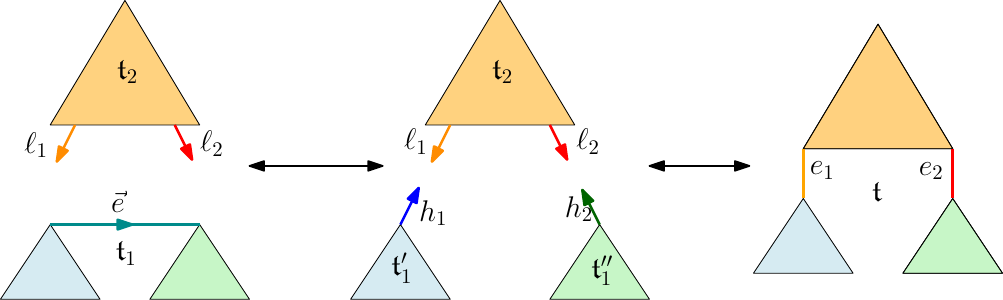}
    \caption{A schematic representation of the marked face bijection on trees.}
    \label{fig:bij f trees}
\end{figure}

\subsection{Translation to bipartite maps}

Notice that the bijection on trees can be seen as applying thrice the bijection of the previous section. More precisely, to merge the trees one applies $\sepa$ on $\gt_1$ to cut it in two, then applies twice $\merge$ to glue each of the two trees obtained to $\gt_2$.

\[
\def\arraystretch{1.7}
\begin{array}{rl}
(\gt_1, \vec{e}) + (\gt_2, \ell_1, \ell_2) &\xrightarrow{\sepa\text{ to }(\gt_1, \vec{e})} (\gt_1', h_1) + (\gt_1'', h_2) + (\gt_2, \ell_1, l_2)\\
&\xrightarrow{\merge\text{ to } (\gt_1', h_1) \text{ and } (\gt_2, l_1)} (\gt_1'', h_2) + (\gt_2', h_2, \vec{e_1})\\
&\xrightarrow{\merge\text{ to } (\gt_1'', h_2) \text{ and } (\gt_2', l_2)} (T, \vec{e_1}, \vec{e_2}) 
\end{array}
\]

Therefore, this second bijection on bipartite maps can be described as applying thrice the marked vertex bijection in the same way as on trees. 

The marked face bijection can also be described directly on bipartite map with slit and sew operations, but is it quite tedious so we choose to omit it. Figures~\ref{fig:face sew ex} and \ref{fig:face cut ex} provide examples of these two mappings.

\begin{figure}[ht!]
    \centering
    \includegraphics[width=0.88\textwidth]{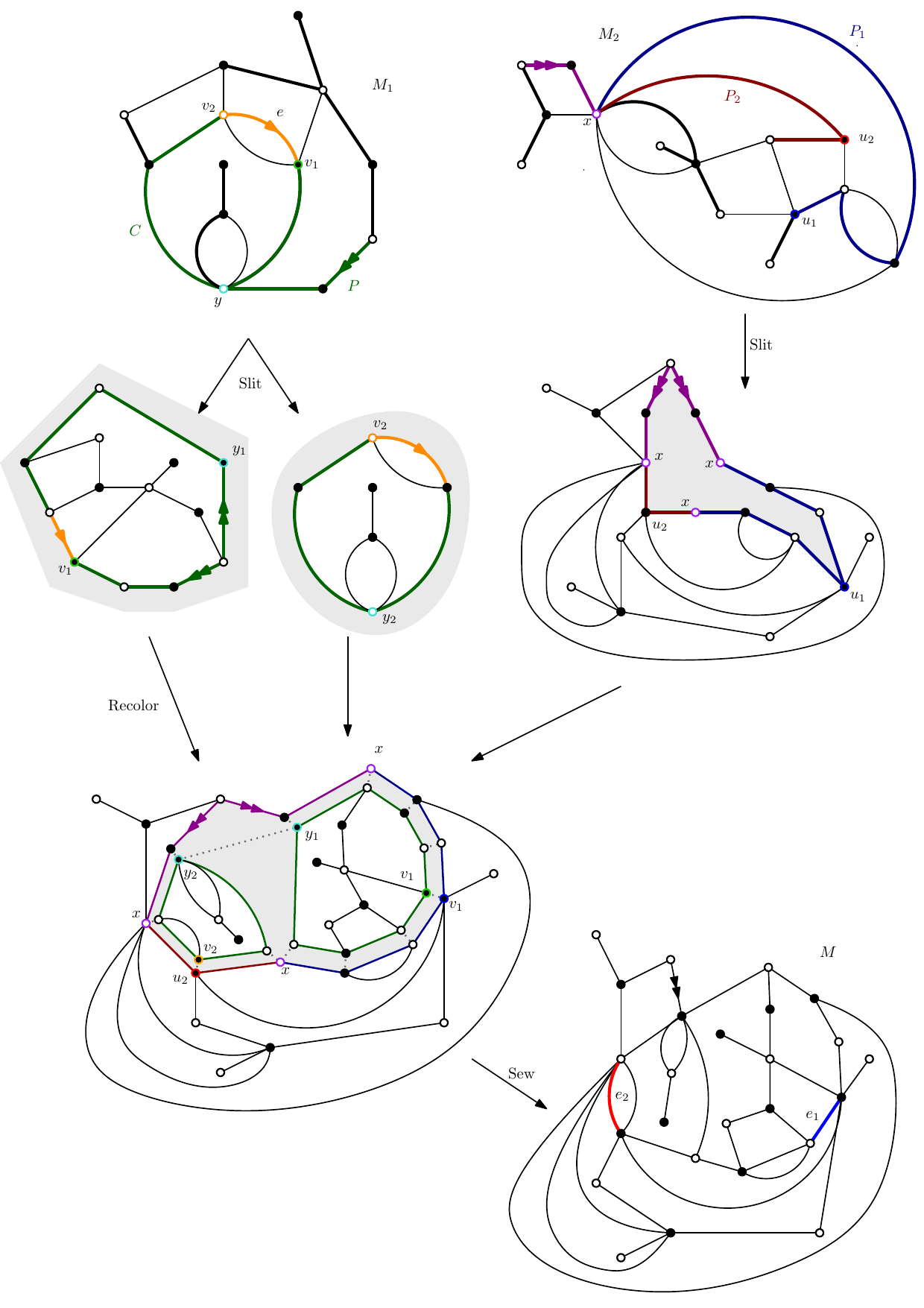}
    \caption{An example of the second Slit and Sew mapping. The bold edges are the spanning trees. The colored edges are the slit paths.}
    \label{fig:face sew ex}
\end{figure}

\begin{figure}[ht!]
    \centering
    \includegraphics[width=\textwidth]{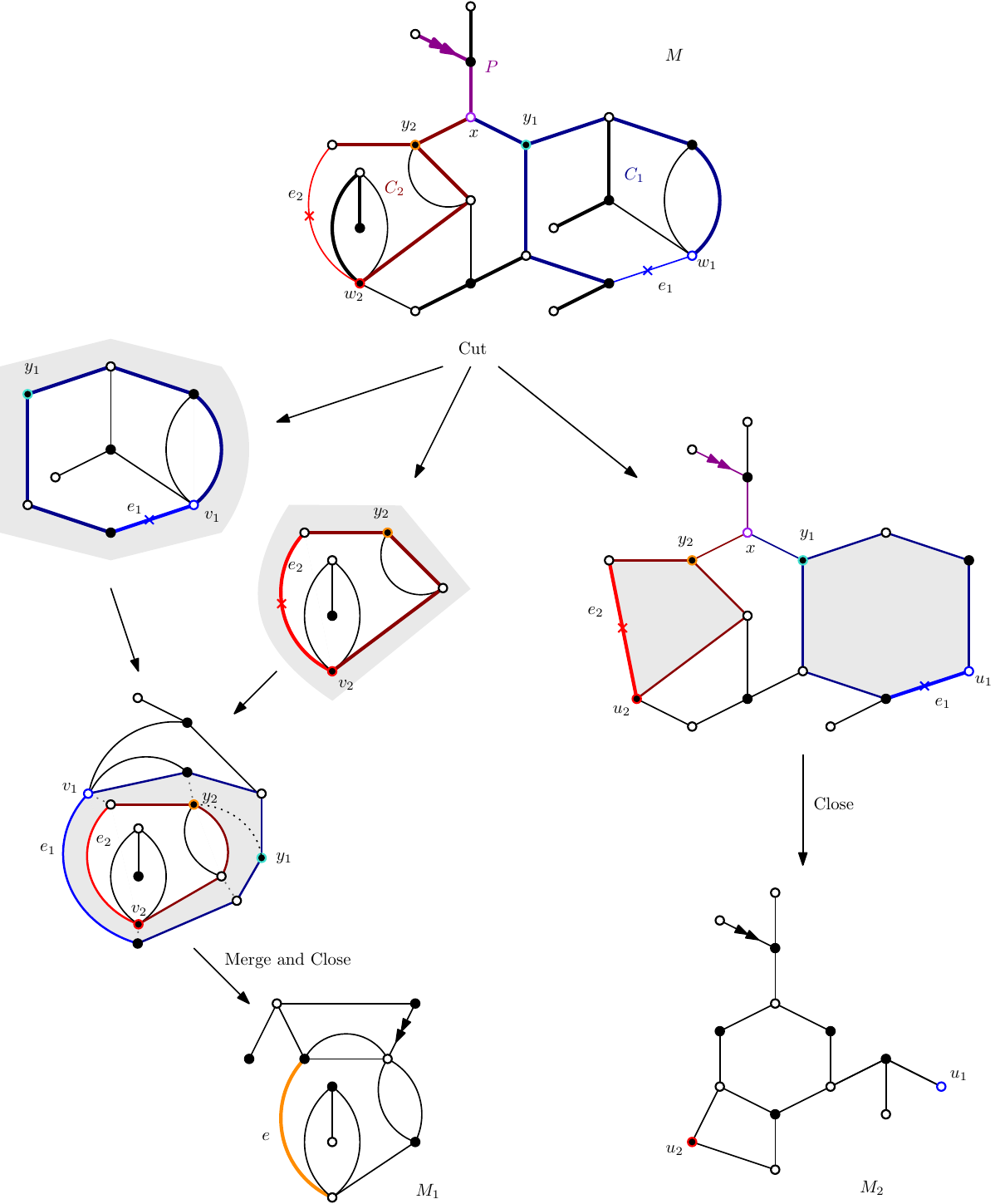}
    \caption{An example of the second Cut and Close mapping. The bold edges are the spanning trees. The blue and red edges form the two cut cycles and the purple one the paths to them.}
    \label{fig:face cut ex}
\end{figure}


\section*{Acknowledgements}

The author wishes to thank Baptiste Louf for suggesting the problem and for useful discussions, as well as Mireille Bousquet-Mélou for her insightful comments.

\newpage

\bibliographystyle{alpha}
\bibliography{biblio}

\end{document}